\newtheorem{thm}{Theorem}[section]
\newtheorem{prop}[thm]{Proposition}
\newtheorem{cor}[thm]{Corollary}
\newtheorem{question}[thm]{Question}
\newtheorem{remk}[thm]{Remark}
\title{Congruence Subgroups of the Virtual Braid Group}
\author{Wade Bloomquist, Alexa Goldberg, and Nancy Scherich}
\date{}
\begin{document}

\subjclass[2020]{20F36, 57K12, 57K10  }
\keywords{Braid groups, virtual braid groups, Burau representation, congruence subgroups}
\maketitle
\begin{abstract}
We extend the notion of congruence subgroups of the braid group to the virtual braid group using an extension of the integral Burau representation. We prove that the level 2 congruence subgroup of the virtual braid group is the pure virtual braid group, recovering a virtual analogue of a result of Arnol'd. We pose several questions which highlight the difference between the classical and virtual braid groups.

\end{abstract}

\newcommand{\Z}{\mathbb{Z}}
\newcommand{\hp}{\hat{\rho}}

\section{Introduction}

The study of braid groups has a rich history in mathematics as braids are interesting topologically, geometrically, and algebraically. From these different perspectives, braids have been generalized in many ways.  A common theme of study is to choose an attribute about the braid group and see if a generalization of the braid group also has such an attribute.
In this paper, we consider properties of congruence subgroups in a generalization of the braid group called the virtual braid group. 

Let $B_n$ denote the braid group on $n$ strands.
The integral Burau representation is a group homomorphism from $B_n$ into $GL_n(\mathbb{Z})$, which is a specialization of the Burau representation first defined by Werner Burau in 1935 \cite{Burau}.
The \emph{level $m$ congruence subgroup of $B_n$}, denoted $B_n[m]$, is the kernel of the integral Burau representation when the coefficients are reduced mod $m$.
\[B_n[m]:= \ker\bigg(B_n \xrightarrow{ Burau}{} GL_{n}(\Z[t^{\pm 1}])\xrightarrow{ t=-1}{} GL_{n}(\Z)\xrightarrow{\!\!\!\!\!\!\mod m}{} GL_{n}(\Z/m\Z) \bigg) \]
These congruence subgroups  are finite index subgroups of $B_n$ that have been well studied by many authors, e.g. A'Campo \cite{ACampo}, Arnol'd \cite{A68}, Assion \cite{Assion},  Brendle \cite{brendle}, Brendle--Margalit \cite{brendleM18}, B.-Patzt-S. \cite{BPS}, Kordek--Margalit \cite{KM19}, McReynolds \cite{McReynolds}, Nakamura \cite{Nakamura}, and Stylianakis \cite{S18}. 
 In particular, Arnol'd proved that the congruence subgroup mod $2$ is the pure braid group as a subgroup of $B_n$ \cite{A68}. Arnol'd's methods are quite technical and rely on topological methods.
 The first contribution we give in this paper is a simple and purely algebraic proof of Arnol'd's result, which can be found in Section \ref{sec:classical}.

The \emph{virtual braid group on $n$ strands}, denoted by $vB_n$, is an extension of the braid group by adding virtual crossings \cite{Kauff1, Kauff2}, just as virtual knot theory is an extension of classical knot theory.
 Vershinin showed that the integral Burau representation extends to the virtual braid group \cite{VER}, and we define the \emph{level $m$ congruence subgroup of $vB_n$}, denoted $vB_n[m]$ to be the kernel of this extended integral Burau representation with coefficients reduced mod $m$.
 We compare these new virtual congruence subgroups to the congruence subgroups in $B_n$ and recover some virtual analogues of properties known to be true for $B_n$. In particular, we prove the virtual analogue of Arnold's result using the new algebraic proof we give of Arnol'd's result for the braid group.\\

    \noindent \textbf{Main Theorem.}\emph{ The level 2 congruence subgroup of $vB_n$  is the pure virtual subgroup of $vB_n$.}\\

We end the paper with some interesting open questions to further the study congruence subgroups of $vB_n$, which highlight some differences compared to classical congruence subgroups, and an in-depth discussion of the congruence subgroups of $vB_2$.\\

In this paper we address several structural questions related to congruence subgroups of virtual braid groups.  We collect them here to highlight differences with the classical case and emphasize which questions remain open.

\begin{center}
\begin{tabular}{ | p{4.2cm} | p{3.9cm}| p{5.8cm} | } 
 \hline
 Posed Question & Answer for classical $B_n$ & Partial answers for virtual $vB_n$  \\ \hline

\textbf{Question \ref{ques:indexofprod}}:
What is the index of  $vB_n[m] vB_n[\ell]$ in $vB_n[gcd(m,\ell)]$?
 
 &
$B_n[m] B_n[\ell]$ is equal to $B_n[gcd(m,\ell)]$, so the index is $1$. \cite{S18,BPS}
 
 & In Proposition \ref{prop:relprimenot2}, we show that $vB_n[m] vB_n[\ell]=vB_n(=vB_n[1])$ if and only if both $m=2$ and $\ell$ is odd.\\ \hline

 \textbf{Question \ref{ques:whatstheimage}}: What is the image of the integral Burau representation?
    
    &This is known and can be found in \cite{BPS}(See Corollary C.).
    
    &The techniques of \cite{BPS} do not apply directly because they  rely, for example, on Question \ref{ques:indexofprod}.
     We provide an answer to this question when $n=2$ in Corollary \ref{cor:imagen=2}, showing the image is isomorphic to $D_\infty$.\\ \hline

\textbf{Question \ref{ques:semidirectAsubgroup}}:
For what values of $m$ is $vB_n/vB_n[m]$ a subgroup of $vB_n$? & 
 As $B_n$ has no torsion, $B_n/B_n[m]$ is never a subgroup of $B_n$. & 
 We show in Corollary \ref{cor:semi} that this is true for $m=2$. We show in Theorem \ref{thm:n=2notsemidirect} when $n=2$, that this is true if and only if $m=2$. \\ \hline

     \textbf{Question \ref{ques:normalclosure}:}
What is the normal closure of $B_n[m]$ inside $vB_n$? What is a normal generating set for $vB_n[m]$? &
  A normal generating set for $B_n[m]$ is fully known and described in \cite{BH}(See Corollary B.) together with a historical account of the problem.
 &
For $n=2$, in Theorem \ref{thm:notnormalclosure} we prove that $vB_2[m]$ is not the normal closure of $B_2[m]$, and we exhibit a normal generating set for $vB_2[m]$ in  Theorem \ref{thm:normalclosure}.
     We note that the normal closure of $B_n$ inside of $vB_n$ is known as the Kure virtual braid group. \\ 
 \hline
\end{tabular}
\end{center}

We note that all results are phrased in terms of the virtual braid group, but can be reformulated for the welded braid group.  This is elaborated in Remark \ref{weld}.

\bigskip

\noindent \textbf{Acknowledgments.} This project is part of an undergraduate research experience for the second author lead by the third author.
This project was partially funded by the AMS-Simons travel grant and the National Science Foundation Grant No. DMS-2532699 funding the third author. We would like to thank our anonymous referee for helpful comments and answers to several of our posed questions.

\section{Congruence Subgroups in the Classical Setting.}\label{sec:classical}

The \emph{braid group} on $n$-strands is denoted $B_n$ and is generated by $\sigma_1,\cdots, \sigma_{n-1}$. 
Each $\sigma_i$ can be visualized as a positive crossing between the $i$'th and $i+1$'st strand, as shown in Figure \ref{fig:braid_diagrams}, where the group operation is vertical stacking.
The inverse of a generator is a negative crossing between the $i$'th and $i+1$'st strand, and each $\sigma_i$ has infinite order.
There are two types of relations, far commutativity and the braid relation, also shown in Figure \ref{fig:braid_diagrams}.

\begin{figure}[htp]
\centering
\begin{picture}(220,120)
\put(-40,80){\includegraphics[width=.3\textwidth]{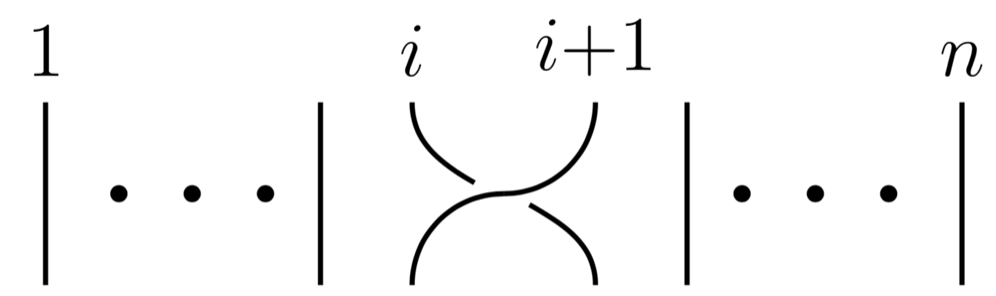}\hspace{.6cm}}
\put(-40,15){\includegraphics[width=.3\textwidth]{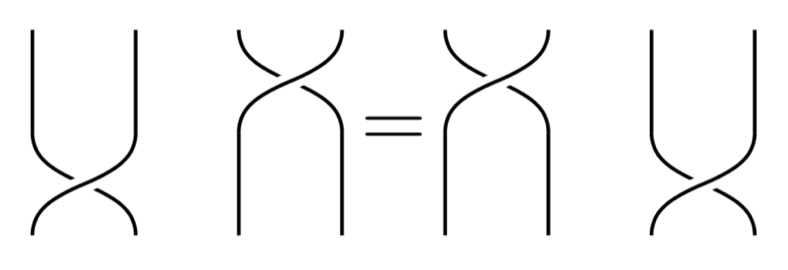}\hspace{.7cm}}
\put(130,10){\includegraphics[width=.3\textwidth]{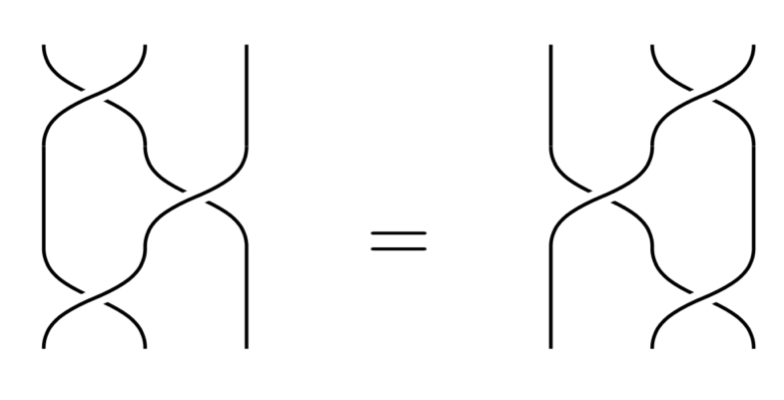}\hspace{.7cm}}
\put(-55,70){(A)}
\put(-55,5){(B)}
\put(120,5){(C)}
\put(20,75){$\sigma_i$}
\put(0,10){$\sigma_i\sigma_j=\sigma_j\sigma_i$}
\put(0,-3){for $j\neq i-1,i+1$}
\put(153,5){$\sigma_i\sigma_{i+1}\sigma_i=\sigma_{i+1}\sigma_{i}\sigma_{i+1}$}
\end{picture}
\caption{(A) The generator $\sigma_i$. (B) The far commutativity relation. (C) The braid relation.}  \label{fig:braid_diagrams}
\end{figure}

Let $S_n$ denote the symmetric group of size $n!$. The group $S_n$ can be generated by transpositions $\tau_1,\cdots, \tau_{n-1}$, where $\tau_i=(i ,i+1)$ is the order 2 element that swaps $i$ and $i+1$.
The $\tau_i$'s satisfy the far commutativity and braid relation just as the $\sigma_i$'s do.
There is a surjective map $\pi:B_n\rightarrow S_n$ that maps $\sigma_i\mapsto \tau_i$. This map can be thought of as sending a braid to the induced permutation given by following the endpoints of the braided strands.
The kernel of this map $\pi$ is denoted $P_n$ and is called the \emph{pure braid group} as a normal subgroup of $B_n$.
Visually, pure braids are braids where each strand starts and ends in the same horizontal position.

The (unreduced) \emph{Burau Representation} is a group homomorphism 
$\rho :B_n\rightarrow GL_{n}(\mathbb{Z}[t,t^{-1}])$ given by 
  \begin{equation*}
\rho(\sigma _i) =
\left (\begin{array}{c|cc|c}
I_{i-1} & 0 &0 &0 \\ \hline
0 & 1-t & t  &0\\
0&1 & 0 &0 \\ \hline
0 &0& 0& I_{n-i-1} \\
\end{array} \right )
\end{equation*} 

\noindent for each $i=1,2,\cdots, n-1$ and where $I_k$ is the $k\times k$ identity matrix.
 Specializing $t=-1$ into the Burau representation is called the integral Burau representation and has image in $GL_n(\mathbb{Z})$.
 The composition of $\rho$ with specializing $t=-1$ will be denoted by $\rho_-$, as shown in Figure \ref{fig:comdiag}.
  The integral Burau representation has found applications across mathematics in areas such as algebraic geometry, number theory, dynamics, and topology.  We refer the reader to Brendle-Margalit and the references within \cite{brendleM18} for more details about the integral Burau representation.
 Specializing $t=1$ factors through the map $\pi$ and gives the permutation representation of $S_n$ as permutation matrices in $GL_n(\mathbb{Z})$.
 This specialization of Burau is denoted $\rho_+$.  Lastly, let $r_m$ be the map that reduces the integer coefficients mod $m$, and composing $r_m\circ \rho_{\pm}$ yields representations of $B_n$ into $GL_n(\mathbb{Z}_m)$.
 Figure \ref{fig:comdiag} shows a generically non-commuting diagram relating all of these maps together.

The \emph{congruence subgroup of} $B_n$ \emph{mod $m$}, also called the \emph{level m congruence subgroup}, is denoted $B_n[m]$ and is the kernel of the integral Burau representation reduced mod $m$ (follow the top edge along the diagram in Figure \ref{fig:comdiag}),
\[B_n[m]:=ker(r_m\circ\rho_-).\]
 Congruence subgroups of $B_n$ are finite index subgroups that have been well studied by many authors, e.g. A'Campo \cite{ACampo}, Arnol'd \cite{A68}, Assion \cite{Assion},  Brendle \cite{brendle}, Brendle--Margalit \cite{brendleM18}, Bloomquist-Patzt-S. \cite{BPS}, Kordek--Margalit \cite{KM19}, McReynolds \cite{McReynolds}, Nakamura \cite{Nakamura}, and Stylianakis \cite{S18}. 
 In particular, Arnol'd proved that the congruence subgroup mod $2$ is the pure braid group as a subgroup of $B_n$ \cite{A68}.

\begin{figure}
    \centering
\begin{tikzcd}[sep=1.25cm]
&& GL_n(\mathbb{Z}) \ar[dr,"r_m"]
&
&[1.5em] \\
B_n  \ar[r, "\rho"] \ar[urr, "\rho_-", bend left=5, dashed] \ar[drr, "\rho_+", dashed, bend right=5] \ar[d, "\pi"] \ar[drr, "\circlearrowleft" near start, phantom, bend right=20] \ar[d, "\pi"]
& GL_n(\mathbb{Z}[t,t^{-1}])\ar[ur,swap,  "t=-1"]\ar[dr, "t=1"]
& \stackrel{\text{not generally}}{\text{\footnotesize{commutative}}}& GL_n(\mathbb{Z}_m)\\
S_n \ar[rr,swap,  "\phi:=\text{permutation rep.}"]&& GL_n(\mathbb{Z})\ar[ur, swap, "r_m"] 
&
& 
\end{tikzcd}
\caption{Diagram that is commutative exactly when $m=1,2$.}\label{fig:comdiag}
\end{figure}
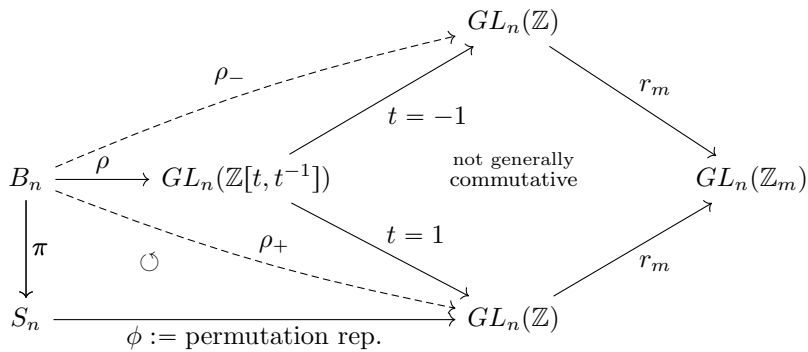

 \begin{thm}[Arnol'd, 1968]\label{thm:arnold} $B_n[2]=P_n$.\end{thm}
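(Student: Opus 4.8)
The plan is to show the two inclusions $P_n\subseteq B_n[2]$ and $B_n[2]\subseteq P_n$ separately. The first inclusion is the easy direction and essentially already packaged in Figure \ref{fig:comdiag}: when $m=2$ the specialization $t=1$ and $t=-1$ agree after reduction mod $2$, so the diagram commutes at level $2$. Concretely, reducing the Burau matrices $\rho(\sigma_i)$ mod $2$ gives exactly the permutation matrices, since $1-t\equiv 1+1\equiv 0$ and $t\equiv 1\pmod 2$. Hence $r_2\circ\rho_- = \phi\circ\pi$, and therefore $\ker\pi = P_n$ is contained in $\ker(r_2\circ\rho_-)=B_n[2]$. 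This step is a one-line matrix computation.

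For the reverse inclusion $B_n[2]\subseteq P_n$, I would argue that if a braid $\beta$ induces a nontrivial permutation then its image under $r_2\circ\rho_-$ is nontrivial. The cleanest route is to use the factorization just established: since $r_2\circ\rho_- = \phi\circ\pi$, a braid $\beta\in B_n[2]$ satisfies $\phi(\pi(\beta)) = I$ in $GL_n(\mathbb{Z}/2\mathbb{Z})$. So it suffices to observe that the mod-$2$ reduction of the permutation representation $\phi\colon S_n\to GL_n(\mathbb{Z})\to GL_n(\mathbb{Z}/2\mathbb{Z})$ is still faithful. This is immediate: a permutation matrix reduced mod $2$ still has a single $1$ in each row and column, recording the permutation exactly, so the only permutation mapping to the identity is the identity. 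Therefore $\pi(\beta)=1$, i.e. $\beta\in P_n$.

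The only real content is recognizing that the whole theorem collapses to the single commutativity observation $r_2\circ\rho_- = \phi\circ\pi$ together with the faithfulness of the mod-$2$ permutation representation; after that both inclusions are routine. The main ``obstacle'' — really just the point requiring care — is verifying the matrix identity $r_2(\rho(\sigma_i))$ equals the transposition matrix $\phi(\tau_i)$ for every generator, which reduces to checking the single $2\times2$ block, and noting that this is enough because both $r_2\circ\rho_-$ and $\phi\circ\pi$ are homomorphisms determined by their values on the $\sigma_i$. I expect the author's proof to proceed along exactly these lines, phrased via the commuting diagram in Figure \ref{fig:comdiag}; the payoff advertised in the introduction is that this replaces Arnol'd's topological argument with this elementary one, which will then be reused for the virtual case where $\pi$ is replaced by the projection $vB_n\to S_n$.
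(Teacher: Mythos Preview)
Your proposal is correct and follows essentially the same route as the paper: both arguments hinge on the single observation that $r_2\circ\rho_-=r_2\circ\phi\circ\pi$ (since $-1\equiv 1\pmod 2$) together with the faithfulness of the mod-$2$ permutation representation $r_2\circ\phi$. The paper packages this as a one-line equality of kernels rather than two inclusions, but the content is identical; just be careful to write $r_2\circ\phi\circ\pi$ rather than $\phi\circ\pi$ when you mean the map into $GL_n(\mathbb{Z}/2\mathbb{Z})$.
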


 The diagram in Figure \ref{fig:comdiag} recovers a simple algebraic proof of Arnol'd's result.
 This algebraic proof is likely known to experts, but, to the authors' knowledge, does not appear in the literature so we provide the proof here.
 Additionally, this algebraic proof generalizes to virtual congruence subgroups, in Section \ref{sec:virtual}, where Arnol'd's topological proof does not.

\begin{proof}[Proof of Theorem~\ref{thm:arnold}]
     After reducing mod 2, $-1=1$. So specializing $\rho$ at $t=1$ and then reducing mod 2 is the same as specializing $\rho$ at $t=-1$ and then reducing mod 2, $r_2\circ \rho_+=r_2\circ \rho_-$. Thus, the diagram in Figure \ref{fig:comdiag} is commutative when $m=2$, and $\ker(r_2\circ \rho_+)=\ker(r_2\circ \rho_-)=B_n[2]$.
     
     Since the bottom square of the diagram also commutes, we have that $\rho_+=\phi\circ \pi$, where $\phi$ is the permutation representation of $S_n$. 
     The map $\phi$ is injective with image the permutation matrices of $GL_n(\mathbb{Z})$, all of which have entries of 0 or 1. Reducing a permutation matrix mod 2 has no effect, so we get that $r_2\circ\phi$ is also injective.
     Finally, we see that $\ker(r_2\circ \phi \circ \pi)=\ker(\pi)=P_n$. Thus,
     \[B_n[2]=\ker(r_2\circ\rho_-)=\ker(r_2\circ \rho_+)=\ker(r_2\circ\phi\circ \pi)=\ker(\pi)=P_n.\]
 \end{proof}

Notice that for each generator $\sigma_i$, its square $\sigma_i^2$ is a pure braid and so $\sigma_i^2\in B_n[2]$. In fact, this pattern is true for all $m$, that is $\sigma_i^m\in B_n[m]$, \cite[R3: Powers of Dehn Twists]{S18}.
 Two other noteworthy facts about congruence subgroups are that $B_n[m]\cap B_n[\ell]=B_n[lcm(m,\ell)]$ and $B_n[m]\cdot B_n[\ell]=B_n[gcd(m,\ell)]$, which are proved in various forms in  \cite[Lemma 6.2]{S18} \cite[Proposition 3.1]{BPS}.

One last observation worth noting is that the kernel of Burau (unspecialized) is contained in the intersection of all the congruence subgroups of $B_n$. In particular, all braids in the kernel of Burau must be pure.

\section{Congruence Subgroups in the Virtual Setting}\label{sec:virtual}

There are many generalizations of the braid group from both an algebraic and topological point of view.
In this section we will look at the virtual braid group and extend the notion of congruence subgroups to this larger group.
We will refer to ``the braid group" as \emph{the classical braid group} to help clarify the difference between the classical and virtual settings.

The \emph{virtual braid group}, $vB_n$, is an extension of the classical braid group by adding virtual crossings. 
A \emph{virtual crossing} is a permutation of the strands --there is no sense of over or under-- and is depicted by a crossing with a circle, as seen in Figure \ref{fig:virtualbraids}.
Algebraically, these virtual crossings are transpositions in the symmetric group, so they will be denoted by $\tau_i$.
 A presentation for $vB_n$ is as follows. $vB_n$ is generated by $\sigma_1,\cdots, \sigma_{n-1}$ and $\tau_1,\cdots, \tau_{n-1}$, and subject to the following relations:

\begin{figure}
\centering
\begin{picture}(220,105)
\put(-20,15){\includegraphics[scale=.1]{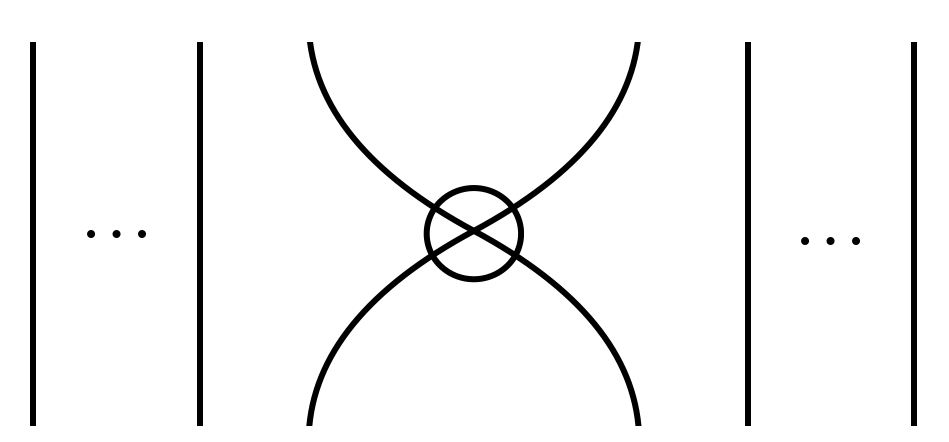}\hspace{.6cm}}
\put(150,10){\includegraphics[scale=.3]{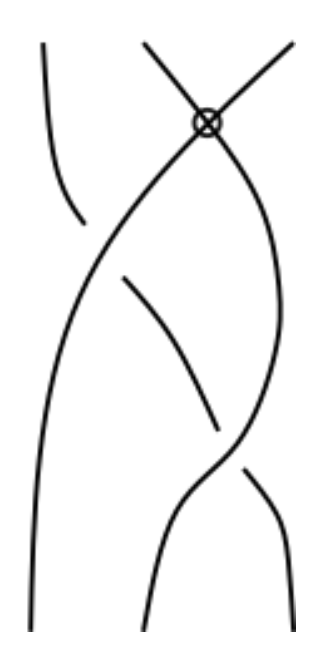}\hspace{.7cm}}
\put(-55,5){(A)}
\put(120,5){(B)}
\put(23,10){$\tau_i$}
\put(10,60){$i$ \hspace{.5cm} $i+1$}
\end{picture}
\caption{(A) The virtual crossing generator $\tau_i$. (B) An example virtual braid with two classical and one virtual crossings. }  \label{fig:virtualbraids}
\end{figure}

\begin{enumerate}
\item Classical relations: (the $\sigma_i$'s generate the classical braid group inside $vB_n$)
\begin{align*}
    \sigma_i\cdot \sigma_{i+1}\cdot \sigma_i &= \sigma_{i+1}\cdot \sigma_{i}\cdot \sigma_{i+1} &   \\
    \sigma_i\cdot \sigma_j &= \sigma_j\cdot \sigma_i & \text{for }| j-i|> 1
\end{align*}

\item Virtual relations: (the $\tau_i$'s generate the symmetric group inside $vB_n$)
\begin{align*}
    \tau_i^2&=id\\
    \tau_i\cdot \tau_{i+1}\cdot \tau_i &= \tau_{i+1}\cdot \tau_{i}\cdot \tau_{i+1}\\
    \tau_i\cdot \tau_j &= \tau_j\cdot \tau_i &\text{for }| j-i|> 1
\end{align*}

\item Mixed relations:
\begin{align*}
    \tau_i\cdot \sigma_{i+1}\cdot \tau_i &= \tau_{i+1}\cdot \sigma_{i}\cdot \tau_{i+1}\\
    \sigma_i\cdot \tau_j &= \tau_j\cdot \sigma_i &\text{for }| j-i|> 1
\end{align*}

\end{enumerate}
These relations come from the virtual Reidemeister moves for virtual knots \cite{Kauff1,Kauff2}, and there is a virtual analog of Alexander's theorem that states every virtual knot can be made as the closure of a virtual braid \cite{Kam}. Despite the mixed relations, $B_n$ embeds into $vB_n$, as was first proven by Kamada \cite{Kamada2004}; see also Bellingeri-Paris \cite{BP}.

There are two important maps from $vB_n$ to the symmetric group $S_n$. Firstly, in the classical setting,  the level 2 congruence subgroup of $B_n$ is the subgroup of pure braids, $B_n[2]=P_n$, where $P_n$ is the kernel of the map $\pi:B_n\rightarrow S_n$ given by $\pi(\sigma_i)=\tau_i$.
The map $\pi$ extends to the virtual braid group by acting as the identity on virtual crossings, that is $\pi_v:vB_n\rightarrow S_n$ is defined by $\pi_v(\sigma_i)=\tau_i$ and $\pi_v(\tau_i)=\tau_i$.
The kernel of $\pi_v$ is called the \emph{pure virtual braid group} and denoted here by $vP_n$,  \cite{B04}.
The second useful map is $\pi_K:vB_n\rightarrow S_n$ which maps $\pi_K(\sigma_i)=id$ and $\pi_K(\tau_i)=\tau_i$. The kernel of $\pi_K$ is denoted $kvB_n$ and is called the \emph{kure virtual braid group} and is the normal closure of $B_n$ inside of $vB_n$.
Unlike the classical setting, both maps $\pi_v$ and $\pi_K$ split $vB_n$ into two semi-direct products with $S_n$,  $vB_n=vP_n\rtimes S_n$ and $vB_n=kvB_n\rtimes S_n$, see \cite{B04,BP}.

Vershinin showed that the Burau representation extends to the virtual braid group in a natural way \cite{VER}, see also \cite{PS}, and in this paper we  work only with extending the integral Burau representation. 
As was discussed in Section \ref{sec:classical}, the Burau representation $\rho:B_n\rightarrow GL_n(\mathbb{Z}[t,t^{-1}])$ when specialized at $t=1$ yields the permutation representation of the symmetric group.
The virtual integral Burau representation on $vB_n$ will be denoted by $\rho_v$ where $\rho_v:vB_n\rightarrow GL_n(\mathbb{Z})$ is given by 
\begin{align*}
     \rho_v(\sigma_i)&=\rho|_{t=-1}(\sigma_i)=\rho_-(\sigma_i) &\text{  the usual integral Burau representation of } B_n\subset vB_n \\
     \rho_v(\tau_i):&=\rho|_{t=1}(\sigma_i)=\rho_+(\sigma_i) &\text{ the permutation representation of } S_n\subset vB_n. 
\end{align*}

Just as in the classical setting, we can define the \emph{congruence subgroup of} $vB_n$ mod $m$, or also known as the \emph{level $m$ congruence subgroup of} $vB_n$, as the kernel of the virtual integral Burau representation of $vB_n$ followed by $r_m$, reduction mod $m$.
This congruence subgroup will be denoted by $vB_n[m]$.
\[vB_n[m]:=\ker(r_m\circ\rho_v)\]

\begin{remk}\label{weld}
    Direct computation shows that $\rho_v$ factors through the quotient of $vB_n$ to the welded braid group, $wB_n$.  We have chosen to phrase all results in terms of the virtual braid group, but analogous results hold for the welded braid groups using identical proofs. Note that the normal closure of the additional relations  
    \begin{equation} \tau_i\sigma_{i+1}\sigma_i=\sigma_{i+1}\sigma_i\tau_{i+1}\end{equation}\label{eq:OCC} 
live entirely in the kernel of $\rho_v$.  An area of future work is to explore ways to bring the topological viewpoint of the welded braid group and it's isomorphism to the loop braid group into the conversation of congruence.
We appreciate the anonymous referee pointing out that, in contrast, $\rho_v$ does not factor through the unrestricted virtual braid group.
\end{remk}

By definition, $\rho_v$ restricted to $B_n$  as subset of $vB_n$ is the classical integral Burau representation $\rho_-$ on $B_n$. 
So we can see the containment $B_n[m]\subseteq vB_n[m]$ right away. 
The first question that arises is have we gained any new elements  by passing to the virtual congruence subgroup, or is $B_n[m] = vB_n[m]$? We quickly observe that $B_n[m]$ is a proper subgroup of $vB_n[m]$, as shown in the next Proposition.

\begin{prop}
$ B_n[m]\subsetneq vB_n[m].$
\end{prop}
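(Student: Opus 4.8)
The plan is to exhibit an explicit element of $vB_n[m]$ that does not lie in $B_n[m]$, which suffices since we already know $B_n[m]\subseteq vB_n[m]$. The natural candidate is a product of a classical generator and the corresponding virtual generator, namely $\sigma_i\tau_i$ (or $\tau_i\sigma_i$). The heuristic is that $\rho_v(\sigma_i)=\rho_-(\sigma_i)$ and $\rho_v(\tau_i)=\rho_+(\sigma_i)$ agree after reducing mod $2$ and, more importantly, both reduce to matrices that differ from the identity only in a $2\times 2$ block; the product $\rho_v(\sigma_i)\rho_v(\tau_i)$ should collapse this discrepancy. Concretely, $\rho_-(\sigma_i)$ has the block $\left(\begin{smallmatrix} 1-t & t \\ 1 & 0\end{smallmatrix}\right)$ at $t=-1$, i.e. $\left(\begin{smallmatrix} 2 & -1 \\ 1 & 0\end{smallmatrix}\right)$, while $\rho_+(\sigma_i)=\rho_v(\tau_i)$ has the block $\left(\begin{smallmatrix} 0 & 1 \\ 1 & 0\end{smallmatrix}\right)$.

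First I would compute the $2\times 2$ block of $\rho_v(\sigma_i\tau_i)=\rho_v(\sigma_i)\rho_v(\tau_i)$, which is $\left(\begin{smallmatrix} 2 & -1 \\ 1 & 0\end{smallmatrix}\right)\left(\begin{smallmatrix} 0 & 1 \\ 1 & 0\end{smallmatrix}\right)=\left(\begin{smallmatrix} -1 & 2 \\ 0 & 1\end{smallmatrix}\right)$. This is not the identity, so $\sigma_i\tau_i$ itself is not in $vB_n[m]$ for general $m$, but it is unitriangular-ish: observe that $\left(\begin{smallmatrix} -1 & 2 \\ 0 & 1\end{smallmatrix}\right)$ has finite order issues, so instead I would pass to $(\sigma_i\tau_i)^2$, or better, I would directly seek a word $w$ in $\sigma_i,\tau_i$ whose image is $I_n$ but which is not pure. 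A cleaner choice: consider $w=\tau_i\sigma_i\tau_i\sigma_i^{-1}$. Its image under $\rho_v$ is $\left(\begin{smallmatrix} 0 & 1 \\ 1 & 0\end{smallmatrix}\right)\left(\begin{smallmatrix} 2 & -1 \\ 1 & 0\end{smallmatrix}\right)\left(\begin{smallmatrix} 0 & 1 \\ 1 & 0\end{smallmatrix}\right)\left(\begin{smallmatrix} 2 & -1 \\ 1 & 0\end{smallmatrix}\right)^{-1}$; I would compute this block and check whether for a suitable small power it becomes the identity mod every $m$ — but since we want a single element in $vB_n[m]$ for the fixed given $m$, I would instead simply take a high enough power, e.g. exploit that the relevant matrix group element has finite order modulo $m$, so some power $w^k$ lies in $\ker(r_m\circ\rho_v)=vB_n[m]$.

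The key remaining step is then to verify that this element $w^k$ is not in $B_n[m]$, and the simplest way is to show it is not even in $B_n$: the permutation image is the obstruction. Composing $\rho_v$ with the further specialization $t=1$ (equivalently, looking at the induced permutation) sends $\sigma_i$ and $\tau_i$ both to the transposition $\tau_i\in S_n$, so the underlying permutation of any word $w$ in $\sigma_i,\tau_i$ is just $\tau_i^{(\text{number of letters})}$. I would choose $w$ and the power $k$ so that $w^k$ has odd total letter-count in the $\tau_i$'s and $\sigma_i$'s combined — so its permutation is the transposition $\tau_i\neq \mathrm{id}$ — hence $w^k\notin P_n$. Since $B_n[m]\subseteq P_n$ (this is immediate because $B_n[m]\subseteq B_n[2]=P_n$ for $m$ even, and for general $m$ one has $B_n[m]\subseteq P_n$ as noted in the discussion after Theorem~\ref{thm:arnold}; alternatively $B_n[2]\supseteq B_n[m]$ when $2\mid m$, and when $2\nmid m$ one argues via the permutation representation directly), it follows that $w^k\notin B_n[m]$, while $w^k\in vB_n[m]$, establishing the strict inclusion.

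The main obstacle I anticipate is bookkeeping: choosing the cleanest explicit word so that a single small power lands in $vB_n[m]$ for an arbitrary fixed $m$ while still having nontrivial underlying permutation. The slick resolution is to avoid powers entirely by noting that $\rho_v(\tau_i\sigma_i\tau_i\sigma_i)$ — or some equally short word — already equals a matrix whose $2\times 2$ block is $I_2$ over $\mathbb Z$, not just mod $m$; then the element lies in $vB_n[m]$ for every $m$ simultaneously, and one only needs to count letters mod $2$ to see it is not pure, hence not in $B_n[m]$. I would therefore spend the proof nailing down one such explicit word (my guess is $\tau_i\sigma_i\tau_i\sigma_i$ or $\tau_1\sigma_1^{-1}\tau_1\sigma_1$ works, up to recomputing the block product) and then give the two-line permutation argument.
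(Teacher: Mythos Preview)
Your candidate element is exactly the paper's: the paper exhibits $\sigma_i\tau_i\sigma_i\tau_i$, checks by direct computation that $\rho_v(\sigma_i\tau_i\sigma_i\tau_i)=I_n$ over $\mathbb{Z}$ (so it lies in $vB_n[m]$ for every $m$), and then observes that it is not a classical braid. Your ``slick resolution'' at the end lands on the same word, and indeed the $2\times 2$ block of $\rho_v(\tau_i\sigma_i\tau_i\sigma_i)$ is
\[
\begin{pmatrix}0&1\\1&0\end{pmatrix}\begin{pmatrix}2&-1\\1&0\end{pmatrix}\begin{pmatrix}0&1\\1&0\end{pmatrix}\begin{pmatrix}2&-1\\1&0\end{pmatrix}=\begin{pmatrix}1&0\\0&1\end{pmatrix},
\]
so that part is fine.

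The gap is in your argument that the element is not in $B_n[m]$. You propose to show it is ``not pure, hence not in $B_n[m]$'', but both steps break. First, $\sigma_i\tau_i\sigma_i\tau_i$ has four letters, so under $\pi_v$ (which sends both $\sigma_i$ and $\tau_i$ to the transposition) it maps to the identity: it \emph{is} in $vP_n$, so the permutation obstruction you invoke is simply absent for this word. Second, the containment $B_n[m]\subseteq P_n$ that you assert ``for general $m$'' is false when $m$ is odd: for instance $\sigma_i^3\in B_n[3]$ but $\sigma_i^3\notin P_n$. So even if you switched to an odd-length word, the implication ``not pure $\Rightarrow$ not in $B_n[m]$'' would fail for odd $m$.

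The paper avoids all of this by noting instead that $\sigma_i\tau_i\sigma_i\tau_i$ is not in $B_n$ at all (it is not a classical braid), which immediately gives $\sigma_i\tau_i\sigma_i\tau_i\notin B_n[m]\subset B_n$. That is the one-line replacement for your permutation argument; if you want to justify it explicitly, the cleanest route is the $n=2$ case, where $vB_2\cong\mathbb{Z}\Asterisk\mathbb{Z}/2\mathbb{Z}$ and $\sigma\tau\sigma\tau$ is a reduced alternating word in the free product, hence not in the factor $\langle\sigma\rangle=B_2$.
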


\begin{proof}
    Note that $\sigma_i\tau_i\sigma_i\tau_i\in vB_n[m]$ for all $m$, as verified through direct matrix computation, and $\sigma_i\tau_i\sigma_i\tau_i$ is not a classical braid.
\end{proof}

We know $B_n[m]$ is a proper subgroup of $vB_n[m]$, but how much bigger is $vB_n[m]$? Is it possible that $vB_n[m]$ is the entire virtual braid group; does $vB_n[m]=vB_n$? This answer is again, no.

\begin{prop}
For $m>1$, a non-trivial braid in $vB_n[m]$ must contain a classical crossing, and thus $vB_n[m]\subsetneq vB_n$.
\end{prop}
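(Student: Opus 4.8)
The plan is to exploit the fact that a virtual braid with no classical crossing is, by definition, a word in the generators $\tau_1,\dots,\tau_{n-1}$ alone, so it lies in the subgroup $T:=\langle \tau_1,\dots,\tau_{n-1}\rangle\leq vB_n$; and $\rho_v$ carries $T$ into the group $\Pi_n\leq GL_n(\mathbb{Z})$ of $n\times n$ permutation matrices, since $\rho_v(\tau_i)=\rho_+(\sigma_i)$ is the permutation matrix of the transposition $(i,i+1)$. I would first record the trivial but crucial observation that for $m>1$ the reduction map $r_m$ is injective on $\Pi_n$: every permutation matrix has all entries in $\{0,1\}$ and $0\not\equiv 1\pmod m$, so two permutation matrices congruent mod $m$ coincide; in particular the only permutation matrix reducing to $I_n$ mod $m$ is $I_n$ itself.

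With this in hand, suppose $\beta\in vB_n$ has no classical crossing. Then $\rho_v(\beta)\in\Pi_n$, and if moreover $\beta\in vB_n[m]=\ker(r_m\circ\rho_v)$ then $r_m(\rho_v(\beta))=I_n$, whence $\rho_v(\beta)=I_n$ by the previous paragraph. To conclude that $\beta$ is trivial I would note that the assignment $(i,i+1)\mapsto\tau_i$ defines a surjection $S_n\twoheadrightarrow T$ (the $\tau_i$ satisfy the Coxeter relations inside $vB_n$), and that composing it with $\rho_v|_T$ recovers the standard permutation representation $\phi\colon S_n\to GL_n(\mathbb{Z})$, which is faithful; hence $S_n\to T$ is an isomorphism and $\rho_v|_T$ is injective, so $\rho_v(\beta)=I_n$ forces $\beta=1$. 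Thus the identity is the only element of $vB_n[m]$ expressible without a classical crossing, i.e.\ every nontrivial braid in $vB_n[m]$ must contain a classical crossing. The strict containment $vB_n[m]\subsetneq vB_n$ is then immediate: $\tau_1\in vB_n$ is nontrivial and has no classical crossing (equivalently $\rho_v(\tau_1)$ has a $0$ in its $(1,1)$ entry and so is not congruent to $I_n$ mod $m$), hence $\tau_1\notin vB_n[m]$.

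I do not expect a real obstacle here. The only points needing a little care are the bookkeeping identification of ``having no classical crossing'' with membership in $T=\langle\tau_i\rangle$, and the invocation that the permutation representation of $S_n$ is faithful (equivalently that $T\cong S_n$, which itself falls out of the argument above). It is also worth flagging the harmless caveat used implicitly: the trivial braid has no classical crossing and lies in $vB_n[m]$, so the statement is to be read for nontrivial braids.
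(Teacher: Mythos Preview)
Your proof is correct and follows essentially the same approach as the paper: both identify braids with no classical crossing as elements of the subgroup $\langle\tau_1,\dots,\tau_{n-1}\rangle\cong S_n$, observe that $\rho_v$ restricted there is the faithful permutation representation, and note that reduction mod $m$ is injective on permutation matrices for $m>1$. Your write-up is a bit more explicit in justifying the isomorphism $T\cong S_n$ and in flagging the trivial-braid caveat, but the underlying argument is the same.
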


\begin{proof}
    The virtual integral Burau representation restricted to $S_n$ as a subgroup of $vB_n$ is the permutation representation, which is injective on $S_n$. 
    Permutation matrices have entries that are 0's and 1's, which remain unchanged when reduced mod $m$, for any integer $m>1$.
    So, $r_m\circ \rho_v$ is injective on $S_n$, and $S_n=\langle \tau_1,\cdots,\tau_{n-1}\rangle$ are the only braids in $vB_n$ without a classical crossing. 
\end{proof}

So, we finally have that $B_n[m]\subsetneq vB_n[m]\subsetneq vB_n$
where $vB_n[m]$ is normal in $vB_n$, but $B_n[m]$ is not normal in either $vB_n[m]$ or $vB_n$. It is natural to question if  the normal closure of $B_n[m]$ inside $vB_n$ is $vB_n[m]$. 

\begin{prop}
    The normal closure of $B_n[m]$ inside $vB_n$ is not $vB_n[m]$. \footnote{We thank our anonymous referee for pointing out this argument to us.}
\end{prop}

\begin{proof}
Let $g=\tau_i\sigma_{i+1}\sigma_{i}\tau_{i+1}\sigma_i^{-1}\sigma_{i+1}^{-1}$ and recall the map $\pi_K:B_n\rightarrow S_n$ with kernel $kvB_n=\langle \langle B_n\rangle\rangle$. Since $\pi_K(g)=\tau_i\tau_{i+1}\neq id$, we have that $g\notin kvB_n=\langle \langle B_n\rangle\rangle$. Since $B_n[m]\subset B_n$ and $g\notin\langle \langle B_n\rangle\rangle$, then $g\notin\langle \langle B_n[m]\rangle\rangle$.
However, by Remark $\ref{eq:OCC}$, $g$ is in the kernel of $\rho_v$ and so $g\in vB_n[m]$ for every $m$.
\end{proof}

\begin{question}\label{ques:normalclosure}
    What is the normal closure of $B_n[m]$ inside $vB_n$?  More generally, what is a normal generating set for $vB_n[m]$? 
\end{question}

Turning our focus to the level 2 congruence subgroup, 
in the classical setting, we previously discussed the equality $B_n[2]=P_n$, where $P_n$ is the kernel of the map $\pi:B_n\rightarrow S_n$ given by $\pi(\sigma_i)=\tau_i$.
Recall that $\pi$ extends to the virtual braid group by $\pi_v:vB_n\rightarrow S_n$ is defined by $\pi_v(\sigma_i)=\tau_i$ and $\pi_v(\tau_i)=\tau_i$, with $\ker(\pi_v)=vP_n$  and  $vB_n=vP_n\rtimes S_n$,  see \cite{B04,BP}.
Using almost exactly the same argument as given in the proof of Theorem \ref{thm:arnold}, we show that the level 2 congruence subgroup of $vB_n$ is the pure virtual braid group $vP_n$.

\begin{thm}\label{thm:vpure}
    $vB_n[2]=vP_n$.
\end{thm}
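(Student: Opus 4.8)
The plan is to mimic the algebraic proof of Theorem~\ref{thm:arnold} exactly, replacing the classical objects with their virtual counterparts. The key observation is that, after reducing mod $2$, the matrix $\rho_v(\sigma_i) = \rho_-(\sigma_i)$ becomes congruent to $\rho_+(\sigma_i) = \rho_v(\tau_i)$, since $-1 \equiv 1 \pmod 2$ and the only difference between the two $n\times n$ blocks is a $-t$ entry specialized at $t = -1$ versus $t = 1$. First I would record that $r_2 \circ \rho_v(\sigma_i) = r_2 \circ \rho_v(\tau_i)$ for every $i$, which means $r_2 \circ \rho_v$ is constant on the generators in a way that factors through $\pi_v \colon vB_n \to S_n$; concretely, $r_2 \circ \rho_v = (r_2 \circ \phi) \circ \pi_v$, where $\phi$ is the permutation representation of $S_n$.

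Next I would invoke injectivity: $\phi$ is injective with image consisting of $0/1$-matrices, so reducing mod $2$ does nothing, and hence $r_2 \circ \phi$ is injective on $S_n$. Combining this with the factorization gives
\[
vB_n[2] = \ker(r_2 \circ \rho_v) = \ker\bigl((r_2 \circ \phi)\circ \pi_v\bigr) = \ker(\pi_v) = vP_n,
\]
where the third equality uses that $r_2 \circ \phi$ is injective. This is exactly the pure virtual braid group by definition.

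The only subtlety — and the one point where virtual is genuinely different from classical — is checking that the factorization $r_2 \circ \rho_v = (r_2\circ\phi)\circ\pi_v$ is well-defined, i.e. that it respects all the defining relations of $vB_n$, including the mixed relations $\tau_i \sigma_{i+1}\tau_i = \tau_{i+1}\sigma_i\tau_{i+1}$ and $\sigma_i \tau_j = \tau_j\sigma_i$. But this is automatic: $r_2 \circ \rho_v$ is already a well-defined homomorphism on $vB_n$ (Vershinin), and we have merely checked it agrees with the homomorphism $(r_2\circ\phi)\circ\pi_v$ on a generating set, so the two homomorphisms coincide on all of $vB_n$. Thus there is no real obstacle; the proof is a direct transcription of the classical argument, and the substantive content is the elementary mod-$2$ computation that $\rho_-(\sigma_i) \equiv \rho_+(\sigma_i) \pmod 2$. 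I would close by noting that this makes the non-commutative diagram analogous to Figure~\ref{fig:comdiag} commute precisely when $m = 2$, just as in the classical case.
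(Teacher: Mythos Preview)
Your proposal is correct and follows essentially the same route as the paper: both establish the factorization $r_2\circ\rho_v = (r_2\circ\phi)\circ\pi_v$ by checking it on the generators $\sigma_i$ and $\tau_i$ (using $-1\equiv 1\pmod 2$ and $\pi_v(\sigma_i)=\pi_v(\tau_i)$), and then invoke the injectivity of $r_2\circ\phi$ on $S_n$ to conclude that $\ker(r_2\circ\rho_v)=\ker(\pi_v)=vP_n$. Your explicit remark that well-definedness is automatic because two homomorphisms agreeing on a generating set must coincide is a welcome clarification of a step the paper leaves implicit.
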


\begin{proof}
    Let $\phi$ be the permutation representation of $S_n$.
Recall from the proof of Theorem \ref{thm:arnold} that $r_2\circ\rho_+(\sigma_i)=r_2\circ\phi\circ \pi(\sigma_i)$. Since $\pi=\pi_v|_{B_n}$ then $\phi\circ \pi(\sigma_i)=\phi\circ \pi_v(\sigma_i)$, which shows that  $r_2\circ\rho_+(\sigma_i)=r_2\circ \phi\circ \pi_v(\sigma_i)$.
\begin{center}

\adjustbox{center}{%
\begin{tikzcd}[sep=1.25cm]
vB_n \arrow[r, "\rho_v" ] \arrow[d, swap, "\pi_v"] \arrow[dr,phantom, "\circlearrowleft"]& GL_n(\mathbb{Z}) \arrow[d, "r_2" ] \\
S_n\arrow[r, swap, "r_2\circ \phi" ]
& GL_n(\mathbb{Z}/2\mathbb{Z})
\end{tikzcd}
}
\end{center}
The following identities show that the diagram above commutes.
\begin{align*}
    r_2\circ\rho_v(\sigma_i)&=r_2\circ \rho_-(\sigma_i)\stackrel{-1=1 (mod 2)}=r_2\circ\rho_+(\sigma_i)=r_2\circ \phi\circ \pi_v(\sigma_i)\\
 r_2\circ\rho_v(\tau_i)&=r_2\circ\rho_+(\sigma_i)=r_2\circ \phi\circ \pi_v(\sigma_i)\stackrel{\pi_v(\sigma_i)=\pi_v(\tau_i)}{=}r_2\circ \phi\circ \pi_v(\tau_i)
    \end{align*}

    Since $r_2\circ \phi$ is injective, then $\ker(\pi_v)=\ker(r_2\circ \phi\circ \pi_v)$. Thus,
    \[vP_n=\ker(\pi_v)=\ker(r_2\circ \phi\circ \pi_v)=\ker(r_2\circ\rho_v)=vB_n[2].\]

\end{proof}

\begin{cor}\label{cor:semi}
    $vB_n\cong vB_n[2]\rtimes (vB_n/vB_n[2])$.
\end{cor}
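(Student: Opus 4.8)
The plan is to derive the corollary directly from Theorem~\ref{thm:vpure} together with the known splitting $vB_n = vP_n \rtimes S_n$ recorded earlier in this section (see \cite{B04,BP}). Since Theorem~\ref{thm:vpure} identifies $vB_n[2]$ with $vP_n$, the content of the corollary is really just unwinding what $vB_n/vB_n[2]$ is and checking that the existing splitting of $vB_n$ over $vP_n$ is exactly a splitting over $vB_n[2]$.

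First I would note that $\pi_v \colon vB_n \to S_n$ is surjective with kernel $vP_n = vB_n[2]$, so the first isomorphism theorem gives $vB_n/vB_n[2] \cong S_n$. Next, recall that the inclusion $S_n = \langle \tau_1,\dots,\tau_{n-1}\rangle \hookrightarrow vB_n$ (sending each $\tau_i$ to the corresponding virtual generator) is a section of $\pi_v$: indeed $\pi_v(\tau_i) = \tau_i$ by definition of $\pi_v$, so $\pi_v$ restricted to this copy of $S_n$ is the identity. This is precisely the datum of the semidirect product decomposition $vB_n = vP_n \rtimes S_n$ cited above. Combining these two observations with $vP_n = vB_n[2]$ yields
\[
  vB_n \;=\; vB_n[2] \rtimes S_n \;\cong\; vB_n[2]\rtimes \bigl(vB_n/vB_n[2]\bigr),
\]
where the action of $vB_n/vB_n[2] \cong S_n$ on $vB_n[2] = vP_n$ is the conjugation action transported through the section.

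I do not expect a genuine obstacle here; the only point requiring a small amount of care is making explicit that the abstract semidirect product $vB_n = vP_n\rtimes S_n$ from \cite{B04,BP} is witnessed by a section of $\pi_v$ (as opposed to some other surjection), so that replacing $S_n$ by the isomorphic quotient $vB_n/vB_n[2]$ is legitimate. Since the section is literally the inclusion of the $\tau_i$'s and $\pi_v$ acts as the identity on them, this is immediate. One could alternatively phrase the whole argument without invoking \cite{B04,BP}: the subgroup $\langle \tau_1,\dots,\tau_{n-1}\rangle$ maps isomorphically onto $S_n = vB_n/vB_n[2]$ under the quotient map because $\pi_v$ restricts to the identity on it and it has trivial intersection with $vP_n = \ker\pi_v$, and it generates $vB_n$ together with $vP_n$; hence the extension $1 \to vB_n[2] \to vB_n \to vB_n/vB_n[2] \to 1$ splits.
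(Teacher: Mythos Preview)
Your argument is correct and follows exactly the same route as the paper's proof: invoke Theorem~\ref{thm:vpure} to identify $vB_n[2]=vP_n$, use the known splitting $vB_n=vP_n\rtimes S_n$, and note $vB_n/vB_n[2]\cong S_n$. The paper's version is just more terse, writing only ``$S_n\cong vB_n/vP_n$ and $vB_n=vP_n\rtimes S_n$.''
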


\begin{proof}
    $S_n\cong vB_n/vP_n$ and $vB_n=vP_n\rtimes S_n$.
\end{proof}

Corollary \ref{cor:semi} highlights a key difference between $B_n$ and $vB_n$. Since $B_n$ is torsion free, neither $S_n$ nor $B_n/B_n[m]$ are subgroup of $B_n$. Since $vB_n$ has torsion, there is potential for $vB_n/vB_n[m]$ to be a subgroup of $vB_n$ for $m$'s other than 1 and 2. This leads to an interesting question.

\begin{question}\label{ques:semidirectAsubgroup}
For what values of $m$ is $vB_n/vB_n[m]$ a subgroup of $vB_n$? Said another way, for what values of $m$ is  $vB_n\cong vB_n[m]\rtimes (vB_n/vB_n[m])$?
\end{question}

\begin{prop}(An answer to Question \ref{ques:semidirectAsubgroup} for $n\geq 5$)\footnote{We thank our anonymous referee for pointing out this argument to us.}
    For $n\geq 5$ and $m\neq 1$, then $vB_n/vB_n[m]$ embeds into $vB_n$ if and only if $m=2$.
\end{prop}

\begin{proof}
Let $G=vB_n/vB_n[m]$ and suppose that $G$ embeds as a subgroup of $vB_n$. Notice $G$ is a finite group in $vB_n$. 
The restriction of $r_m\circ \rho_v$ to $S_n$ is injective, hence the index of $vB_n[m]=\ker(r_m\circ \rho_v)$ in $vB_n$ is at least $n!$, and so $|G|\leq n!$.
Furthermore, since $kvB_n$ is torsion free \cite{GP12}, $\pi_K$ is injective on $G$ which forces $|G|\leq n!$. Thus $|G|=n!$ and $G$ is isomorphic to $S_n$.

Bellengeri-Paris proved that when $n\geq 5$, the only homomorphisms from $vB_n$ to $S_n$ are $\pi_v$ or $\pi_K$ \cite{BP}[see Theorem 2.1]. So, considering the quotient map $vB_n\rightarrow (vB_n/vB_n[m]=G\cong S_n)$ with kernel $vB_n$, it must be the case that $vB_n[m]=\ker(\pi_v)=vP_n$ or $vB_n[m]=\ker(\pi_K)=kvB_n$.
However, $\sigma_i\in kvB_n$ by definition of $\pi_K$, but $\sigma_i\notin vB_n[m]$ for any $m$ (this can be verified by direct computation or the fact that $\sigma_i\notin B_n[m]$). So $vB_n[m]\neq kvB_n$ and it must be the case that $vB_n[m]=vP_n$. Now $\sigma_i\tau_1\in vP_n$, but by direct computation one can show that $\sigma_1\tau_1\in vB_n[m]$ if and only if $m=2$.
\end{proof}

As a start to answer this question for all $n$, it would be helpful to identify the image of virtual integral Burau, and then perhaps classify $vB_n/vB_n[m]$ as was done in for the classical case in \cite{ACampo, BPS} and others. Additionally, it would be helpful to have a presentation for the subgroup $vB_n[m]$.

\begin{question}\label{ques:whatstheimage}
    What is the image of the virtual integral Burau representation?
\end{question}

Next, we look to relationships between congruence subgroups.
In the classical setting, $B_n[m]\cap B_n[\ell]=B_n[lcm(m,\ell)]$. We prove the analogous is true in the virtual setting by following the classical proof in \cite{BPS}.

\begin{prop}
    $vB_n[m]\cap vB_n[\ell]=vB_n[lcm(m,\ell)]$.
\end{prop}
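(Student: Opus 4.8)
The plan is to prove the two inclusions separately, mirroring the classical argument of \cite{BPS}. Set $d = \gcd(m,\ell)$ and $L = \mathrm{lcm}(m,\ell)$. For the inclusion $vB_n[L] \subseteq vB_n[m] \cap vB_n[\ell]$, observe that $m \mid L$ and $\ell \mid L$, so the reduction map $GL_n(\Z/L\Z) \to GL_n(\Z/m\Z)$ is a well-defined ring homomorphism; composing with $r_L \circ \rho_v$ recovers $r_m \circ \rho_v$, hence $\ker(r_L \circ \rho_v) \subseteq \ker(r_m \circ \rho_v) = vB_n[m]$, and symmetrically for $\ell$. This direction is immediate.

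For the reverse inclusion $vB_n[m] \cap vB_n[\ell] \subseteq vB_n[L]$, take $\beta \in vB_n[m] \cap vB_n[\ell]$. Then $\rho_v(\beta) \equiv I_n \pmod m$ and $\rho_v(\beta) \equiv I_n \pmod \ell$. Writing $\rho_v(\beta) = I_n + A$ with $A \in M_n(\Z)$, every entry of $A$ is divisible by $m$ and by $\ell$, hence by $L = \mathrm{lcm}(m,\ell)$; therefore $\rho_v(\beta) \equiv I_n \pmod L$, i.e. $\beta \in vB_n[L]$. I would phrase this at the level of matrix entries rather than invoking the Chinese Remainder Theorem, since $m$ and $\ell$ need not be coprime and the entrywise divisibility statement is exactly what is needed.

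There is essentially no obstacle here: the proposition is purely a statement about the lattice of congruence conditions on the matrices $\rho_v(\beta)$, and it uses nothing specific about $\rho_v$ beyond its being a homomorphism into $GL_n(\Z)$. The only point worth a sentence of care is that $vB_n[k]$ being the kernel of $r_k \circ \rho_v$ makes both inclusions a formal consequence of the functoriality of reduction modulo $k$ together with the arithmetic fact $\{a : m \mid a \text{ and } \ell \mid a\} = L\Z$. I would keep the write-up to two short paragraphs, one per inclusion, and note that the argument is verbatim the classical one since it never touches the presentation of $vB_n$.
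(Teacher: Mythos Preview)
Your proposal is correct and is essentially the same argument as the paper's: both write $\rho_v(\beta)=I_n+X$ and use that every entry of $X$ is divisible by both $m$ and $\ell$ if and only if it is divisible by $\mathrm{lcm}(m,\ell)$. The paper just runs the two inclusions together as a single chain of ``if and only if'' statements rather than treating them separately.
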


\begin{proof}
A braid $\beta$ is in  $vB_n[lcm(\ell,m)]$ if and only if $\rho_v(\beta)=Id_n+X$ where $X$ is an $n\times n$ matrix such that all entries of $X$ are divisible by $lcm(\ell,m)$. This is if and only if $\rho_v(\beta)=Id_n+X$ such that all entries of $X$ are divisible by both $m$ and $\ell$, which is if and only if $\beta\in vB_n[\ell]\cap vB_n[m]$.
\end{proof}

For one last comparison between classical and virtual congruence subgroups, in the classical setting, $B_n=B_n[m]\cdot B_n[\ell]$ when  $m$ and $\ell$ are relatively prime \cite{S18,BPS}.
The standard proof of this identity is to show that each generator of $B_n$ is contained in the product $B_n[m]\cdot B_n[\ell]$ by relying on the fact that $\sigma_i^m\in B_n[m]$. 
If we tried to repeat this argument for $vB_n$, we would also need to show that $\tau_i\in vB_n[m]\cdot vB_n[\ell]$.
When $m=2$, we can show $\tau_i\in vB_n[2]\cdot vB_n[\ell]$.
However, if neither $m$ nor $\ell$ is 2, then $\tau_i\notin vB_n[m]\cdot vB_n[\ell]$, regardless of whether $m$ and $\ell$ are relatively prime.

\begin{prop}\label{prop:relprimenot2}
     $vB_n= vB_n[m]\cdot vB_n[\ell]$ if and only if $m=2$ and $\ell$ is odd.
\end{prop}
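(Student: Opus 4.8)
The plan is to prove both directions by carefully tracking the images under $\rho_v$ and $r_m \circ \rho_v$. First I would handle the harder direction: the ``only if'' claim that $vB_n[m]\cdot vB_n[\ell] = vB_n$ forces $m=2$ (or $\ell=2$) together with the other being odd. The key obstruction is the symmetric group sitting inside $vB_n$: since $r_m \circ \rho_v$ is injective on $S_n = \langle \tau_1, \ldots, \tau_{n-1}\rangle$ whenever $m > 1$, no nontrivial element of $S_n$ can lie in $vB_n[m]$ for $m>1$. So the issue is whether $\tau_i$ can be written as a product $\alpha\beta$ with $\alpha \in vB_n[m]$, $\beta \in vB_n[\ell]$. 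I would argue as follows: a product $\alpha\beta$ with $\alpha \in vB_n[m]$ has $\rho_v(\alpha) \equiv \mathrm{Id}_n \pmod m$, so $\rho_v(\alpha\beta) \equiv \rho_v(\beta) \pmod m$; and similarly $\rho_v(\alpha\beta)\equiv\rho_v(\alpha)\pmod\ell$. If $\alpha\beta = \tau_i$, then $\rho_v(\tau_i)$ — a transposition matrix with entries in $\{0,1\}$ — must be congruent mod $m$ to $\rho_v(\beta)$ where $\beta \in vB_n[\ell]$, and congruent mod $\ell$ to $\rho_v(\alpha)$ where $\alpha\in vB_n[m]$. The diagonal entries of $\rho_v(\tau_i)$ that equal $0$ and the off-diagonal entries that equal $1$ impose parity-type constraints; the crucial numeric point is that $-1 \equiv 1 \pmod m$ holds only for $m = 2$ (and $m=1$), and it is precisely this collapse that lets $\tau_i$ (whose Burau image is $\rho_+(\sigma_i)$) be matched to a classical generator's image $\rho_-(\sigma_i)$ mod $2$.

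Concretely, for the forward direction I would show: (i) if $m$ and $\ell$ are both odd with $\gcd(m,\ell)=1$, then $vB_n[m]\cdot vB_n[\ell] \subseteq vP_n$ by checking each generator of the product lands in $\ker \pi_v$ — since $\rho_v(\alpha)\equiv\mathrm{Id}\pmod m$ and $\equiv \mathrm{Id}\pmod\ell$ are too weak alone, instead I would use that the product of two normal subgroups $vB_n[m]\cdot vB_n[\ell]$ is again normal and compute its image under $\rho_v$, showing it omits $\rho_v(\tau_i)$; and (ii) if neither $m$ nor $\ell$ equals $2$, a direct entry-comparison shows $\tau_i \notin vB_n[m]\cdot vB_n[\ell]$: writing $\tau_i = \alpha\beta$ we would get $\rho_v(\beta) \equiv \rho_v(\tau_i) = \rho_+(\sigma_i) \pmod m$ with $\beta \in vB_n[\ell]$, and since $\beta \in vB_n[\ell]$ means $\rho_v(\beta) \equiv \mathrm{Id} \pmod \ell$, combining via CRT on the relevant $2\times 2$ block forces a contradiction unless $2 \mid m$ or $2\mid\ell$, and then forces the $2$-part to be exactly $2$. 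I expect the cleanest way to organize (ii) is to localize to a single $2\times 2$ block where $\rho_+(\sigma_i)$ acts as $\left(\begin{smallmatrix} 0 & 1 \\ 1 & 0\end{smallmatrix}\right)$ and $\rho_-(\sigma_i)$ as $\left(\begin{smallmatrix} 2 & -1 \\ 1 & 0\end{smallmatrix}\right)$, and observe the matrices in $vB_n[k]$ restricted to that block are $\equiv \mathrm{Id}_2 \pmod k$.

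For the ``if'' direction — $m = 2$, $\ell$ odd — I would show $vB_n[2]\cdot vB_n[\ell] = vB_n$ by exhibiting every generator in the product. By Theorem~\ref{thm:vpure}, $vB_n[2] = vP_n$, and $vB_n = vP_n \rtimes S_n$, so it suffices to show $S_n \subseteq vB_n[2]\cdot vB_n[\ell]$, i.e. each $\tau_i \in vP_n \cdot vB_n[\ell]$. Since $\ell$ is odd, $\sigma_i^\ell \in vB_n[\ell]$ (this is the virtual analogue of the fact $\sigma_i^m \in B_n[m]$, which follows by the same direct matrix computation, or one cites it), and $\pi_v(\sigma_i^\ell) = \tau_i^\ell = \tau_i$ since $\ell$ is odd and $\tau_i$ has order $2$. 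Hence $\tau_i \cdot \sigma_i^{-\ell} \in \ker \pi_v = vP_n = vB_n[2]$, giving $\tau_i = (\tau_i \sigma_i^{-\ell}) \cdot \sigma_i^\ell \in vB_n[2]\cdot vB_n[\ell]$. Together with $\sigma_i = \sigma_i^2 \cdot \sigma_i^{-1}$... no — better: since $vB_n[2]\cdot vB_n[\ell]$ is a subgroup (both factors normal) containing all $\tau_i$ and containing $vP_n = vB_n[2]$, and since $vB_n$ is generated by $vP_n$ together with the $\tau_i$ (as $vB_n = vP_n \rtimes S_n$), we conclude $vB_n[2]\cdot vB_n[\ell] = vB_n$.

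The main obstacle I anticipate is the entry-by-entry congruence bookkeeping in direction (ii): making precise that membership in $vB_n[k]$ genuinely constrains the relevant $2\times 2$ block to be $\equiv \mathrm{Id}_2 \pmod k$ — this is immediate from the definition since $\rho_v(\gamma) = \mathrm{Id}_n + kX$ — and then showing the two congruence conditions on $\rho_v(\alpha)$ and $\rho_v(\beta)$ are simultaneously unsatisfiable. I would streamline this by reducing everything to the single scalar fact that the equation $1 \equiv -1 \pmod{k}$ has no solution with $k > 2$, applied to the $(1,1)$-entry (or the determinant, which is $1$ for $\rho_-(\sigma_i)$ and $-1$ for $\rho_+(\sigma_i)$, and $\det$ is multiplicative so $\det \rho_v(\alpha\beta) = \det\rho_v(\alpha)\det\rho_v(\beta)$ with each factor $\equiv 1$ mod its level).
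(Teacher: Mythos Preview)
Your parenthetical remark at the very end --- using the determinant, noting $\det\rho_v(\sigma_i)=1$, $\det\rho_v(\tau_i)=-1$, and that $\det\rho_v(\gamma)\equiv 1\pmod k$ for $\gamma\in vB_n[k]$ forces $\det\rho_v(\gamma)=1$ when $k>2$ --- is exactly the paper's argument, and it is the step that actually works. The paper packages this via a map $\pi_K:vB_n\to S_n$ (sending $\sigma_i\mapsto e$, $\tau_i\mapsto\tau_i$) composed with $\mathrm{sgn}$, but the content is the same: when neither $m$ nor $\ell$ equals $2$, both $vB_n[m]$ and $vB_n[\ell]$ sit inside $\ker(\det\circ\rho_v)\subsetneq vB_n$, so $\tau_i$ is never in the product.

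By contrast, your primary plan for the ``only if'' direction --- entry-by-entry congruences on the $2\times 2$ block, combined via CRT --- does not yield a contradiction. If $\gcd(m,\ell)=1$, the system ``block $\equiv I_2\pmod\ell$ and block $\equiv\left(\begin{smallmatrix}0&1\\1&0\end{smallmatrix}\right)\pmod m$'' is solvable entrywise for \emph{every} pair $m,\ell$; nothing in those congruences alone detects that $-1\not\equiv 1$. The obstruction only appears once you invoke an integrality constraint on the image (here, $\det\in\{\pm1\}$). So promote the determinant remark from an afterthought to the argument itself, and drop the CRT bookkeeping.

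Your ``if'' direction is correct and slightly different from the paper's. The paper uses B\'ezout ($2a+\ell b=1$) to write $\sigma_i=(\sigma_i^2)^a(\sigma_i^\ell)^b$ and $\tau_i=(\tau_i\sigma_i(\sigma_i^{-2})^a)(\sigma_i^{-\ell})^b$ directly as elements of $vB_n[2]\cdot vB_n[\ell]$. You instead use $vB_n[2]=vP_n$ and the semidirect product $vB_n=vP_n\rtimes S_n$: since $\ell$ is odd, $\sigma_i^\ell\in vB_n[\ell]$ has $\pi_v$-image $\tau_i$, so $\tau_i\sigma_i^{-\ell}\in vP_n$ and $\tau_i\in vB_n[2]\cdot vB_n[\ell]$; the product, being a subgroup containing $vP_n$ and all $\tau_i$, is then all of $vB_n$. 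Both approaches are fine; yours leans on Theorem~\ref{thm:vpure}, the paper's is self-contained.
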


\begin{proof}

 First suppose that neither $m$ nor $\ell$ is $2$.  We proceed by using the determinant map together with $\rho_v$.  First we note that $det(\rho_v(\sigma_i))=1$ and $det(\rho_v(\tau_i))=-1$ for all $i$.  This implies the following diagram commutes
 
\[\begin{tikzcd}
     vB_n\arrow[d,"\pi_K"] \arrow[r,"\rho_v"] & GL_n(\mathbb{Z})\arrow[d,"det"] \\
     S_n\arrow[r,"sgn"] & \mathbb{Z}/2\mathbb{Z}.
 \end{tikzcd}\]

 We note that $\pi_K$ is not the map to the symmetric group used to define $vP_n$, but rather the map defined on generators by $\pi_K(\sigma_i)=id$ and $\pi_K(\tau_i)=\tau_i$ with kernel $kvB_n$. 
 We have that $vB_n=kvB_n\rtimes S_n$, and $ker(sgn\circ \pi_K)=kvB_n\rtimes A_n\subsetneq vB_n$.  As such $ker(det\circ \rho_v)\subsetneq vB_n$.  Then for any $b_m\in vB_n[m]$, as $r_m\circ \rho_v(b_m)=Id$, we have $det\circ r_m\circ \rho_v(b_m)=1$.  
 Using that $m\neq 2$ and thus $-1\neq 1$, this implies $det\circ \rho_v(b_m)=1$, and $vB_n[m]\subset ker(det\circ \rho_v)$.  
 Similarly $vB_n[\ell]\subset ker(det\circ \rho_v)$, which implies $vB_n[m]\cdot vB_n[\ell]\subseteq ker(det\circ \rho_v)\subsetneq vB_n$. (In particular, $\tau_i$ will never be in the product $vB_n[m]\cdot vB_n[\ell]$.)

 Now assume that $m=2$ and $\ell$ is odd.  
 Then by Bezout's Lemma there exist integers $a$ and $b$ such that $2a+\ell b=1$.  Then noting that $\sigma_i^2\in vB_n[2]$, $\sigma_i\tau_i\in vB_n[2],$ and $\sigma_i^\ell\in vB_n[\ell]$ we have
 \[\sigma_i=\sigma_i^{2a+\ell b}=(\sigma_i^2)^a(\sigma_i^\ell)^b\in vB_n[2]\cdot vB_n[\ell]\]
 \[\tau_i=\tau_i\sigma_i((\sigma_i^2)^a(\sigma_i^\ell)^b)^{-1}=(\tau_i\sigma_i(\sigma_i^{2})^{-a})(\sigma_i^{\ell})^{-b} \in vB_n[2] \cdot vB_n[\ell].\]
\end{proof}

In the classical setting, $B_n[gcd(m,\ell)]=B_n[m]\cdot B_n[\ell]$ for any choice of $m$ and $\ell$. Proposition \ref{prop:relprimenot2} shows that the virtual analog of this identity is not true in general; when $gcd(m,\ell)=1$, $vB_n[1]=vB_n[gcd(m,\ell)]=vB_n[m]\cdot vB_n[\ell]$ only when $m=2$ and $\ell$ is odd.
It is always true that $vB_n[m]\cdot vB_n[\ell]$ is contained in $vB_n[gcd(m,\ell)]$ as for any $\alpha\in vB_n[m]$ and $\beta\in vB_n[\ell]$,
\begin{align*}
\rho_v(\alpha)&=Id+mX\\
\rho_v(\alpha)&=Id+\ell Y\\
\rho_v(\alpha\beta)&=Id+mX+\ell Y+m\ell XY=Id+gcd(m,\ell)Z
\end{align*}
which shows $\alpha\beta\in vB_n[gcd(m,\ell)]$. As a product of two normal subgroups, $vB_n[m]\cdot vB_n[\ell]$ is therefore always a subgroup of $vB_n[gcd(m,\ell)]$. The question is if the subgroup is proper, or not.

\begin{question}\label{ques:indexofprod}
      What is the index of $ vB_n[m] \cdot vB_n[\ell] $ in $vB_n[gcd(m,\ell)]$? If $gcd[m,\ell]\neq 1$, is $ vB_n[m] \cdot vB_n[\ell] =vB_n[gcd(m,\ell)]$?
\end{question}

To give more perspective on why these questions are interesting beyond their classical counterparts, in the final section of this paper we answer many of these questions for the 2 strand virtual braid group, $vB_2$.
Similar to the classical braid group, the 2 strand virtual braid group is too small to see generalizable behavior to higher values of $n$.
However, even with such a small value of $n$, the reader can see the increased complexity that comes from working in the virtual setting instead of the classical one.

 \subsection{Answering Questions \ref{ques:normalclosure}-\ref{ques:indexofprod} for $vB_2$.}

 When $n=2$, the presentation of the virtual braid group becomes considerably simpler as the classical relations, mixed relations, and virtual relations (other than $\tau_1^2=id$) all become vacuous.  This gives the following presentation,
 \[vB_2\cong \langle \sigma,\tau|\tau^2=id\rangle\cong \mathbb{Z}\Asterisk \mathbb{Z}/2\mathbb{Z}.\]

From this presentation, we can see the infinite Dihedral group, with presentation $D_\infty\cong \langle s,t|t^2,stst\rangle$, is a quotient of $vB_2$. Also, the finite Dihedral group of  order $2m$, presented by $D_{2m}\cong \langle s,t|t^2,stst,s^m\rangle$, is a further quotient of $vB_2$.
In following theorem, we show that the image of the virtual integral Burau representation composed with reduction $\mod m$ of $vB_2$ is isomorphic to $D_{2m}$ as a subgroup of $GL(2, \Z/m\Z)$ using the commutative diagram in Figure \ref{fig:Dyhedralcomdiag}.
For notational clarity, we let  \[A:=\rho_{v}(\sigma)=\begin{pmatrix}
    2 & -1 \\
    1 & 0
\end{pmatrix}, \qquad B:= \rho_{v}(\tau)=\begin{pmatrix}
    0 & 1 \\
    1 & 0
\end{pmatrix}.\]

\begin{thm}\label{thm:Dyhedralcomdiag}
The diagram in Figure \ref{fig:Dyhedralcomdiag} commutes, and the map
    $f:D_\infty \hookrightarrow \mathrm{GL}(2,\mathbb{Z})$
    defined by $s\mapsto A$ and $t\mapsto B$
    is an injective group homomorphism. Moreover the map 
    $f_m:D_{2m} \hookrightarrow \mathrm{GL}(2,\mathbb{Z}/m\mathbb{Z})$ defined by $s\mapsto [A]_m$ and $t\mapsto [B]_m$
    is also an injective group homomorphism as long as $m\neq 2$.
\end{thm}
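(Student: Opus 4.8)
The plan is to verify the three assertions in order, leveraging the algebraic description $vB_2 \cong \mathbb{Z} \ast \mathbb{Z}/2\mathbb{Z} = \langle \sigma, \tau \mid \tau^2 \rangle$ and the explicit matrices $A$ and $B$. First I would check that $f$ is a well-defined homomorphism: this just requires confirming $B^2 = I$ (immediate) and that $ABAB = I$ (equivalently $(AB)^2 = I$), which is a one-line matrix computation showing $AB$ has order $2$; these are exactly the defining relations of $D_\infty$, so $f$ is a well-defined group homomorphism. The commutativity of the diagram in Figure \ref{fig:Dyhedralcomdiag} should follow by chasing the generators $\sigma$ and $\tau$ around: $\rho_v(\sigma) = A$, $\rho_v(\tau) = B$ by definition, and the quotient map $vB_2 \twoheadrightarrow D_\infty$ sends $\sigma \mapsto s$, $\tau \mapsto t$, so the square closes on generators and hence everywhere.

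Next I would prove injectivity of $f$. Since $D_\infty$ is generated by the infinite-order element $s$ and the involution $t$, and every element of $D_\infty$ can be written uniquely as $s^k$ or $s^k t$ with $k \in \mathbb{Z}$, it suffices to show $f(s^k) = A^k \neq I$ for all $k \neq 0$ and $f(s^k t) = A^k B \neq I$ for all $k$. For the first, note $A$ has infinite order: its eigenvalues are the roots of $x^2 - 2x + 1 = (x-1)^2$, so $A$ is unipotent (conjugate to $\begin{pmatrix} 1 & 1 \\ 0 & 1 \end{pmatrix}$ since $A \neq I$), hence $A^k \neq I$ for $k \neq 0$. For the second, $\det(A^k B) = \det(B) = -1 \neq 1 = \det(I)$, so $A^k B \neq I$ for every $k$. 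This establishes injectivity of $f$.

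For the mod-$m$ statement, I would argue that $f_m$ is well-defined exactly as before (the relations $[B]_m^2 = I$ and $([A]_m[B]_m)^2 = I$ reduce from the integral ones), together with the new relation $[A]_m^m = I$, which holds because $A$ is unipotent: writing $A = I + N$ with $N = \begin{pmatrix} 1 & -1 \\ 1 & -1 \end{pmatrix}$ and $N^2 = 0$, we get $A^m = I + mN \equiv I \pmod m$. So $f_m$ is a well-defined homomorphism $D_{2m} \to \mathrm{GL}(2, \mathbb{Z}/m\mathbb{Z})$. Injectivity is the crux: $D_{2m}$ has $2m$ elements, listed as $[A]_m^k$ and $[A]_m^k[B]_m$ for $0 \le k < m$. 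I need to show these are distinct, equivalently that $[A]_m$ has order exactly $m$ (not a proper divisor) and that no $[A]_m^k[B]_m$ equals any $[A]_m^j$. The latter is again the determinant obstruction: $\det$ is $-1$ on the reflections and $+1$ on the rotations, and $-1 \neq 1$ in $\mathbb{Z}/m\mathbb{Z}$ precisely when $m \neq 2$ — this is exactly where the hypothesis enters. The former reduces to showing $[A]_m^k = I + kN \not\equiv I \pmod m$ for $0 < k < m$, i.e. that $kN$ has a nonzero entry mod $m$; since $N$ has an entry equal to $1$, $kN$ has an entry equal to $k$, and $k \not\equiv 0 \pmod m$ for $0 < k < m$. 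The main obstacle is simply being careful that the $2m$ group elements of $D_{2m}$ really do map to $2m$ distinct matrices, and pinpointing that the $m = 2$ exclusion is forced by the collapse $\det = \pm 1$ becoming trivial; once the unipotent normal form $A = I + N$, $N^2 = 0$ is in hand, every verification is a short direct computation.
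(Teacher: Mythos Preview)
Your proof is correct and follows essentially the same route as the paper: verify the dihedral relations on $A$ and $B$, use the normal form $s^a t^b$ for elements of $D_\infty$ (respectively $D_{2m}$), and check that the resulting matrices are all distinct. The paper carries out the last step by writing out $f(s^a)$ and $f(s^a t)$ as explicit $2\times 2$ matrices and inspecting them, whereas you use the unipotent decomposition $A = I + N$, $N^2 = 0$ together with the determinant to separate rotations from reflections --- a slightly cleaner packaging that makes the role of the hypothesis $m\neq 2$ more transparent, but not a genuinely different argument.
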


\begin{figure}
\[\begin{tikzcd}
 & \mathrm{GL}(2,\mathbb{Z})\arrow[r,two heads,"r_m"] & \mathrm{GL}(2,\mathbb{Z}/m\mathbb{Z}) \\
  & \langle A,B\rangle \arrow[r,two heads]\arrow[u,hook] & \langle [A]_m,[B]_m\rangle\arrow[u,hook]\\
vB_2\arrow[ur,"\rho_{v}"]\arrow[dr] &  &  \\
 & D_\infty\arrow[uu,hook,"f"]\arrow[r,two heads] & D_{2m}\arrow[uu,hook,"f_m"] \\
\end{tikzcd}\]
\caption{ Commutative diagram for Theorems \ref{thm:Dyhedralcomdiag} and \ref{thm:normalclosure}.}
\label{fig:Dyhedralcomdiag}
\end{figure}

\begin{proof}
Direct computation shows that $ABAB=Id$, which shows $f$ and $f_m$ are group homomorphisms.   
 The diagram in Figure \ref{fig:Dyhedralcomdiag} commutes on generators through the identification of $\sigma$ with $s$ and $\tau$ with $t$.  

Notice in $D_\infty$, the fact that $stst=e$ (which implies $ts=s^{-1}t$) combined with $t^2=e$ gives that every element of $D_\infty$ admits a unique expression of the form $s^at^b$ where $a\in \mathbb{Z}$ and $b\in \mathbb{Z}/2\mathbb{Z}$.

Then direct computation gives $f(s^a)=\begin{pmatrix}
    1+a & -a \\
    a & 1-a\\
\end{pmatrix}$ and $f(s^at)=\begin{pmatrix}
    -a & 1+a \\
    1-a & a\\
\end{pmatrix}$ which are distinct for all values of $a\in \Z$, and hence shows that $f$ is injective. 

Similarly we see the result for $f_m$ if $m\neq 2$, where the values of $a$ are contained to $1,...,m-1$.
    
\end{proof}

\begin{cor}\label{cor:imagen=2}(An answer to Question \ref{ques:whatstheimage} for n=2.)
  The image of the virtual integral Burau representation of $vB_2$ is the infinite Dihedral group, $\rho_v(vB_2)\cong D_\infty$.
  \end{cor}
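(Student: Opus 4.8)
The plan is to derive Corollary~\ref{cor:imagen=2} directly from Theorem~\ref{thm:Dyhedralcomdiag}, which does almost all of the work. Recall that $vB_2 \cong \mathbb{Z} \ast \mathbb{Z}/2\mathbb{Z} \cong \langle \sigma, \tau \mid \tau^2 = id\rangle$, and that $\rho_v(\sigma) = A$, $\rho_v(\tau) = B$. By definition the image $\rho_v(vB_2)$ is the subgroup $\langle A, B\rangle \le \mathrm{GL}(2,\mathbb{Z})$, so the goal is to show $\langle A, B\rangle \cong D_\infty$.

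First I would observe that there is a surjective homomorphism $q\colon vB_2 \twoheadrightarrow D_\infty$ given on generators by $\sigma \mapsto s$, $\tau \mapsto t$; this is well-defined because the only defining relation of $vB_2$ is $\tau^2 = id$, which maps to $t^2 = e$, a relation that holds in $D_\infty$. Surjectivity is clear since $s, t$ generate $D_\infty$. Composing with the injection $f\colon D_\infty \hookrightarrow \mathrm{GL}(2,\mathbb{Z})$ from Theorem~\ref{thm:Dyhedralcomdiag}, we get $f \circ q = \rho_v$, since both sides agree on the generators $\sigma, \tau$ (sending them to $A$ and $B$ respectively). Hence the image of $\rho_v$ equals the image of $f \circ q$, which — because $q$ is surjective — equals the image of $f$, namely $f(D_\infty) \cong D_\infty$ since $f$ is injective. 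Therefore $\rho_v(vB_2) = \langle A, B\rangle \cong D_\infty$.

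I do not expect any real obstacle here: the content has already been packaged into Theorem~\ref{thm:Dyhedralcomdiag} (injectivity of $f$ via the explicit normal forms $f(s^a)$ and $f(s^a t)$, and the commutativity of the diagram). The only things left to supply in the corollary's proof are (i) the existence and surjectivity of $q\colon vB_2 \to D_\infty$, which is immediate from the presentations, and (ii) the factorization $\rho_v = f \circ q$, which is a check on the two generators. The mild care needed is simply to phrase this cleanly — one could instead say it more symmetrically: $\rho_v$ factors as $vB_2 \twoheadrightarrow D_\infty \xhookrightarrow{f} \mathrm{GL}(2,\mathbb{Z})$, so its image is $f(D_\infty) \cong D_\infty$. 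If one wanted to avoid even invoking $q$ explicitly, an alternative is to note that $\langle A, B\rangle$ is a quotient of $D_\infty$ (since $A, B$ satisfy $B^2 = Id$ and $ABAB = Id$) and simultaneously contains an isomorphic copy of $D_\infty$ by Theorem~\ref{thm:Dyhedralcomdiag}, forcing equality; but the factorization argument is the most transparent.
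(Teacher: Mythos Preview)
Your proposal is correct and takes essentially the same approach as the paper: both arguments use that $\rho_v$ factors as a surjection $vB_2 \twoheadrightarrow D_\infty$ followed by the injection $f$, so that $\operatorname{im}(\rho_v) = \langle A,B\rangle = \operatorname{im}(f) \cong D_\infty$. The paper states this in a single line, leaving the factorization implicit in the commutative diagram of Theorem~\ref{thm:Dyhedralcomdiag}, while you spell out the map $q$ and the equality $\rho_v = f\circ q$ explicitly.
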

\begin{proof}
    Since $f$ is injective, $D_\infty\cong im(f)=\langle A,B\rangle=im(\rho_v)$.
\end{proof}

Following the commutative diagram in Figure \ref{fig:Dyhedralcomdiag}, we know that $r_m\circ \rho_v$ is equal to the surjection $f_m\circ q_m$, where 
    \[q_m:vB_2\rightarrow D_{2m}\]
    is defined on generators as $q_m(\sigma)=s$ and $q_m(\tau)=t$.  The injectivity of $f_m$ implies that $vB_2[m]$ is exactly the kernel of $q_m$ and we get the following short exact sequence.

    \begin{equation*}\tag{$\dagger$}1\rightarrow vB_2[m]\rightarrow vB_2\xrightarrow[]{q_m} D_{2m}\rightarrow 1 \end{equation*}

Working from the corresponding group presentations in the commutative diagram from Figure \ref{fig:Dyhedralcomdiag} we can find normal generators for $vB_2[m]$.
\begin{thm}\label{thm:normalclosure}
    (An answer to Question \ref{ques:normalclosure}(B.) for n=2.) For any $m\neq 2$, we have 
    \[vB_2[m]\cong \langle\langle \sigma\tau\sigma\tau, \sigma^m\rangle\rangle.\]
\end{thm}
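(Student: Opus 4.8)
The plan is to use the short exact sequence $(\dagger)$, which identifies $vB_2[m]$ with $\ker(q_m)$ where $q_m\colon vB_2\to D_{2m}$ sends $\sigma\mapsto s$, $\tau\mapsto t$. Since $vB_2\cong\langle\sigma,\tau\mid\tau^2\rangle$ and $D_{2m}\cong\langle s,t\mid t^2,(st)^2,s^m\rangle$, the kernel of $q_m$ is precisely the normal closure in $vB_2$ of the extra relators that appear in the presentation of $D_{2m}$ but not in that of $vB_2$, namely $(st)^2$ and $s^m$. Concretely, $q_m$ is the quotient of $vB_2$ by $\langle\langle(\sigma\tau)^2,\sigma^m\rangle\rangle$, so $\ker(q_m)=\langle\langle\sigma\tau\sigma\tau,\sigma^m\rangle\rangle$, and combining with $(\dagger)$ gives $vB_2[m]=\langle\langle\sigma\tau\sigma\tau,\sigma^m\rangle\rangle$.

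The key steps, in order, are: (1) recall that $vB_2$ has presentation $\langle\sigma,\tau\mid\tau^2\rangle$ and $D_{2m}$ has presentation $\langle s,t\mid t^2,(st)^2,s^m\rangle$; (2) observe that $q_m$ is exactly the surjection induced by adding the relators $(\sigma\tau)^2$ and $\sigma^m$ to the presentation of $vB_2$, so by the standard fact that adding relators corresponds to quotienting by their normal closure (von Dyck / the presentation-of-a-quotient lemma), $\ker(q_m)=\langle\langle(\sigma\tau)^2,\sigma^m\rangle\rangle$; (3) invoke the already-established short exact sequence $(\dagger)$, which rests on the injectivity of $f_m$ from Theorem~\ref{thm:Dyhedralcomdiag} (valid since $m\neq 2$), to conclude $vB_2[m]=\ker(q_m)$; (4) assemble: $vB_2[m]=\langle\langle\sigma\tau\sigma\tau,\sigma^m\rangle\rangle$.

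The only genuine subtlety — and the step I would write most carefully — is step (2): one must be precise that if a group $G$ has presentation $\langle X\mid R\rangle$ and $\bar G$ has presentation $\langle X\mid R\cup S\rangle$ on the \emph{same} generating set, with the quotient map sending generators to generators, then the kernel of $G\twoheadrightarrow\bar G$ is the normal closure in $G$ of the image of $S$. This is routine but it is where the argument actually lives; everything else is bookkeeping. One should also double-check that the relator $t^2$ already present in $vB_2$ need not be re-added, and that $(st)^2$ is the relator one wants (equivalently $stst$, matching the displayed generator $\sigma\tau\sigma\tau$). Since $f_m$ fails to be injective when $m=2$ (as noted in Theorem~\ref{thm:Dyhedralcomdiag}), the hypothesis $m\neq 2$ is essential and should be flagged; for $m=2$ one instead has $vB_2[2]=vP_2$ by Theorem~\ref{thm:vpure}. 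No hard calculation is needed here — the computational content was already discharged in the proof of Theorem~\ref{thm:Dyhedralcomdiag}.
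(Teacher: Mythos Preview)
Your proposal is correct and follows essentially the same approach as the paper: both identify $vB_2[m]$ with $\ker(q_m)$ via the short exact sequence $(\dagger)$, then read off the kernel as the normal closure of the extra relators $\sigma\tau\sigma\tau$ and $\sigma^m$ by comparing the presentations $\langle\sigma,\tau\mid\tau^2\rangle$ and $\langle s,t\mid t^2,stst,s^m\rangle$. Your write-up is simply more explicit about the von Dyck step and about why $m\neq 2$ is needed.
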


\begin{proof}
     
    Following the known group presentations, we have 
    \[q_m:\langle \sigma,\tau|\tau^2\rangle\rightarrow \langle s,t|t^2, stst, s^m\rangle,\]
    and as such, we know $vB_2[m]=ker(q_m)=\langle\langle \sigma\tau\sigma\tau,\sigma^m\rangle\rangle$.
\end{proof}

\begin{thm}\label{thm:notnormalclosure} (An answer to \ref{ques:normalclosure}(A.) for n=2.) For all $m\neq 2$ we have $vB_2[m]\neq \langle \langle B_2[m]\rangle\rangle$.
\end{thm}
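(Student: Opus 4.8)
The plan is to work with the short exact sequence $(\dagger)$ and the presentation $vB_2 \cong \langle \sigma, \tau \mid \tau^2 \rangle$, and to compare the normal closure $\langle\langle B_2[m]\rangle\rangle$ against $vB_2[m] = \ker(q_m) = \langle\langle \sigma\tau\sigma\tau, \sigma^m\rangle\rangle$ from Theorem \ref{thm:normalclosure}. First I would identify $B_2[m]$ explicitly: since $B_2 = \langle \sigma \rangle \cong \mathbb{Z}$ and the classical integral Burau sends $\sigma \mapsto A$, the classical congruence subgroup $B_2[m]$ is generated by the smallest power $\sigma^k$ with $[A]_m^k = \mathrm{Id}$. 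Computing powers of $A = \left(\begin{smallmatrix} 2 & -1 \\ 1 & 0 \end{smallmatrix}\right)$ as in the proof of Theorem \ref{thm:Dyhedralcomdiag} gives $A^a = \left(\begin{smallmatrix} 1+a & -a \\ a & 1-a \end{smallmatrix}\right)$, so $[A]_m^a = \mathrm{Id}$ iff $m \mid a$; hence $B_2[m] = \langle \sigma^m \rangle$. Thus $\langle\langle B_2[m]\rangle\rangle = \langle\langle \sigma^m \rangle\rangle$ as a subgroup of $vB_2$, and the claim reduces to showing $\langle\langle \sigma^m \rangle\rangle \subsetneq \langle\langle \sigma^m, \sigma\tau\sigma\tau\rangle\rangle = vB_2[m]$.

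The cleanest way to see this is via a quotient argument: I would construct a homomorphism out of $vB_2$ that kills $\sigma^m$ (hence kills $\langle\langle\sigma^m\rangle\rangle$) but does not kill $\sigma\tau\sigma\tau$. Concretely, consider the quotient $vB_2 / \langle\langle \sigma^m\rangle\rangle$. Using the presentation, this is $\langle \sigma, \tau \mid \tau^2, \sigma^m \rangle \cong \mathbb{Z}/m\mathbb{Z} \ast \mathbb{Z}/2\mathbb{Z}$, a free product of cyclic groups. In this free product the element $\sigma\tau\sigma\tau$ is a nontrivial reduced word (for $m \neq 2$, $\sigma$ and $\sigma^{-1}$ are distinct from the identity and from $\tau$, so $\sigma\tau\sigma\tau$ does not reduce), hence $\sigma\tau\sigma\tau \notin \langle\langle\sigma^m\rangle\rangle$. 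Since $\sigma\tau\sigma\tau \in vB_2[m]$ (it maps to $ABAB = \mathrm{Id}$, or equivalently $q_m(\sigma\tau\sigma\tau) = stst = e$ in $D_{2m}$), we conclude $\langle\langle B_2[m]\rangle\rangle = \langle\langle\sigma^m\rangle\rangle$ is a proper subgroup of $vB_2[m]$.

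The main obstacle — really the only nontrivial point — is justifying that $\sigma\tau\sigma\tau$ represents a nontrivial element of the free product $\mathbb{Z}/m\mathbb{Z} \ast \mathbb{Z}/2\mathbb{Z}$, and more importantly why this requires the hypothesis $m \neq 2$. I would make this precise by invoking the normal form theorem for free products: every nontrivial element has a unique expression as an alternating product of nontrivial coset representatives from the two factors. When $m \geq 3$, the generator $\sigma$ has order $\geq 3$ in $\mathbb{Z}/m\mathbb{Z}$, so $\sigma \neq e$ there, and $\sigma\tau\sigma\tau$ is already in reduced alternating form of syllable length $4$, hence nontrivial. When $m = 2$ this breaks down precisely because $\sigma\tau\sigma\tau$ becomes trivial — indeed $vB_2/\langle\langle\sigma^2\rangle\rangle \cong D_\infty$... wait, rather $\cong \mathbb{Z}/2 \ast \mathbb{Z}/2 \cong D_\infty$, and $stst = e$ is no longer automatic; in fact for $m=2$ one has $vB_2[2] = vP_2 = \langle\langle \sigma\tau\sigma\tau\rangle\rangle$ by Theorem \ref{thm:vpure}, and separately $\langle\langle\sigma^2\rangle\rangle$ might coincide with it, which is exactly why the theorem excludes $m=2$. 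I would close by remarking that the alternative route — directly analyzing which elements of $vB_2[m]$ lie in $\langle\langle\sigma^m\rangle\rangle$ via Reidemeister–Schreier — is available but unnecessary given the free product structure makes the separation transparent.
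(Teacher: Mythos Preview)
Your argument is correct and shares the paper's core move: both identify $B_2[m]=\langle\sigma^m\rangle$ and pass to the quotient $vB_2/\langle\langle\sigma^m\rangle\rangle\cong\mathbb{Z}/m\mathbb{Z}\ast\mathbb{Z}/2\mathbb{Z}$. The paper then finishes with an index comparison---this free product is infinite while $vB_2/vB_2[m]\cong D_{2m}$ is finite---whereas you exhibit the explicit witness $\sigma\tau\sigma\tau\in vB_2[m]\setminus\langle\langle\sigma^m\rangle\rangle$ via the normal form theorem. These are minor variants of the same idea.

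One small correction to your closing digression: $\sigma\tau\sigma\tau$ is in fact still a nontrivial reduced word in $\mathbb{Z}/2\ast\mathbb{Z}/2\cong D_\infty$, so your argument does not break at $m=2$; your explanation for the hypothesis is off. The exclusion of $m=2$ in the statement stems instead from the fact that $(\dagger)$ and Theorem~\ref{thm:normalclosure} rely on the injectivity of $f_m$, which fails there (the case $m=2$ being handled separately by Corollary~\ref{cor:vpure}). This does not affect the validity of your proof for $m\neq 2$.
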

\begin{proof}
    First note that $B_2=\langle \sigma\rangle$ and $B_2[m]=\langle \sigma^m\rangle$.  
    From the short exact sequence ($\dagger$), we see that $vB_2/vB_2[m]\cong D_{2m}$.  Meaning in particular that $vB_2[m]$ is finite index in $vB_2$.

    Now we look to analyze $vB_2/\langle\langle B_2[m]\rangle\rangle=vB_2/\langle \langle \sigma^m\rangle \rangle\cong \langle \sigma,\tau|\tau^2=\sigma^m=id\rangle$.  Thus we observe that $\langle\langle B_2[m]\rangle\rangle$ is infinite index in $vB_2$.

    Thus $vB_2[m]\neq \langle \langle B_2[m]\rangle\rangle$.
    
\end{proof}

\begin{thm}\label{thm:n=2notsemidirect}
(An answer to Question \ref{ques:semidirectAsubgroup} when $n=2$.)    When $m\neq 2$, we have 
    \[vB_2\not\cong vB_2[m]\rtimes (vB_2/vB_2[m]).\]
\end{thm}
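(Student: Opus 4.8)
The plan is to show that no subgroup of $vB_2$ is isomorphic to $D_{2m}$ when $m \neq 2$, which immediately prevents any semidirect product splitting $vB_2 \cong vB_2[m] \rtimes (vB_2/vB_2[m])$ since $vB_2/vB_2[m] \cong D_{2m}$ by the short exact sequence $(\dagger)$. The key structural fact I would exploit is that $vB_2 \cong \mathbb{Z} \Asterisk \mathbb{Z}/2\mathbb{Z}$ is a free product, so by the Kurosh subgroup theorem every subgroup of $vB_2$ is itself a free product of a free group with copies of subgroups of the free factors. Concretely, any subgroup $H \leq vB_2$ decomposes as $H \cong F \Asterisk (\Asterisk_j \mathbb{Z}/2\mathbb{Z})$ for some free group $F$ and some (possibly zero) number of $\mathbb{Z}/2$ factors.

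First I would record the split exact sequence from $(\dagger)$ and note that a splitting would require a copy of $D_{2m}$ inside $vB_2$. Second, I would invoke Kurosh to analyze the possible finite subgroups of $vB_2$: a finite subgroup of a free product $F \Asterisk (\Asterisk_j \mathbb{Z}/2\mathbb{Z})$ must be conjugate into one of the free factors (a general fact about free products — finite subgroups are always subconjugate to a factor), so every finite subgroup of $vB_2 \cong \mathbb{Z} \Asterisk \mathbb{Z}/2\mathbb{Z}$ is either trivial or conjugate to $\mathbb{Z}/2\mathbb{Z} = \langle \tau \rangle$. Third, since $D_{2m}$ for $m \geq 3$ is a finite group that is not cyclic of order $1$ or $2$ (it has order $2m \geq 6$ and contains a $\mathbb{Z}/m$ subgroup), it cannot embed in $vB_2$; for $m=1$ we have $D_2 \cong \mathbb{Z}/2$ which does embed, but $m=1$ is excluded implicitly since $vB_2[1] = vB_2$ and the statement concerns $m \neq 2$ — I would either note $m=1$ is the trivial degenerate case or restrict to $m \geq 3$. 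This contradiction shows the sequence cannot split, giving the claim.

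The main obstacle is making sure the Kurosh/finite-subgroup argument is stated cleanly and that the degenerate small cases ($m=1$, and confirming why $m=2$ genuinely behaves differently) are handled or acknowledged. An alternative, more elementary route avoiding Kurosh would be: a splitting provides an injection $j: D_{2m} \hookrightarrow vB_2$, and the order-$m$ rotation $s^{\text{rot}} \in D_{2m}$ would map to a nontrivial torsion element of $vB_2$ of order $m \geq 3$; but one checks directly from the free product structure (normal form of elements in $\mathbb{Z} \Asterisk \mathbb{Z}/2\mathbb{Z}$) that every torsion element of $vB_2$ is conjugate to $\tau$ and hence has order $2$, a contradiction. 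I would likely present the torsion-element version as the cleanest, citing the standard fact that torsion elements in a free product are conjugate into a free factor, so the only torsion in $\mathbb{Z} \Asterisk \mathbb{Z}/2\mathbb{Z}$ is of order $2$.

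\begin{proof}
Recall from the short exact sequence $(\dagger)$ that $vB_2/vB_2[m] \cong D_{2m}$, so a splitting $vB_2 \cong vB_2[m]\rtimes(vB_2/vB_2[m])$ would yield an injective homomorphism $D_{2m}\hookrightarrow vB_2$. We show no such embedding exists when $m\geq 3$ (the case $m=1$ being the degenerate situation $vB_2[1]=vB_2$).

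Using the presentation $vB_2\cong\langle\sigma,\tau\mid\tau^2\rangle\cong\mathbb{Z}\Asterisk\mathbb{Z}/2\mathbb{Z}$, every element of finite order in a free product is conjugate into one of the free factors; since the free factors here are $\mathbb{Z}$ (torsion free) and $\mathbb{Z}/2\mathbb{Z}$, every torsion element of $vB_2$ is conjugate to $\tau$ and in particular has order $1$ or $2$. Now suppose $j:D_{2m}\hookrightarrow vB_2$ were injective. Writing $D_{2m}\cong\langle s,t\mid t^2,stst,s^m\rangle$, the element $s$ has order $m\geq 3$, so $j(s)$ would be an element of $vB_2$ of order $m\geq 3$, contradicting the previous sentence. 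Hence $D_{2m}$ does not embed in $vB_2$, the sequence $(\dagger)$ does not split, and
\[vB_2\not\cong vB_2[m]\rtimes(vB_2/vB_2[m]).\qedhere\]
\end{proof}
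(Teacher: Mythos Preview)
Your proof is correct and follows essentially the same approach as the paper: identify $vB_2/vB_2[m]\cong D_{2m}$ via $(\dagger)$, then use the free product structure $vB_2\cong\mathbb{Z}\Asterisk\mathbb{Z}/2\mathbb{Z}$ (and the Kurosh/torsion-in-free-products fact) to conclude all finite-order elements of $vB_2$ have order at most $2$, so $D_{2m}$ cannot embed for $m\geq 3$. Your write-up is slightly more detailed in pinpointing the order-$m$ element $s$ and in flagging the degenerate case $m=1$, but the argument is the same.
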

\begin{proof}

    From the short exact sequence ($\dagger$), we see that $vB_2/vB_2[m]\cong D_{2m}$.  
    Then using that $vB_2\cong \mathbb{Z}\Asterisk\mathbb{Z}/2\mathbb{Z}$ together with the Kurosh subgroup theorem we conclude that all finite order elements of $vB_2$ must be order $2$.  Thus $D_{2m}$ is not a subgroup of $vB_2$ and the result follows.
 \end{proof}

\bibliographystyle{plain}
\bibliography{main.bib}{}

\end{document}